\documentclass[11pt, a4paper, english, reqno]{amsart}
\usepackage{amsmath} 
\usepackage{amsthm} 
\usepackage{amssymb} 
\usepackage[dvipsnames]{xcolor}
\usepackage{amscd} 
\usepackage{mathtools}
\usepackage[normalem]{ulem}
\usepackage{mlmodern}
\usepackage{subcaption}

\usepackage{array} 

\usepackage[cal=boondoxo,scr=euler]{mathalfa}
\usepackage[linktocpage]{hyperref} 
\usepackage{cleveref} 
\usepackage{caption} %
\usepackage{graphics,graphicx} 
\usepackage{tikz,tikz-cd} 

\usetikzlibrary{fit,matrix,graphs,graphs.standard,calc,shapes.geometric, arrows.meta}
\usetikzlibrary{decorations.pathreplacing,}
\usetikzlibrary{automata}

\usepackage[shortlabels]{enumitem} 
\usepackage{xcolor}

\usepackage[colorinlistoftodos,bordercolor=orange,backgroundcolor=orange!20,linecolor=orange,textsize=scriptsize]{todonotes}

\DeclareMathAlphabet{\mathsf}{OT1}{\sfdefault}{m}{n}

\newcommand{\nocontentsline}[3]{}
\newcommand{\tocless}[2]{\bgroup\let\addcontentsline=\nocontentsline#1{#2}\egroup}

\usepackage[margin=1.25in]{geometry}
\usepackage{verbatim}

\usepackage{scalerel}

\makeatletter
\def\dual#1{\expandafter\dual@aux#1\@nil}
\def\dual@aux#1/#2\@nil{\begin{tabular}{@{}c@{}}#1\\#2\end{tabular}}
\makeatother

\makeatletter
\@namedef{subjclassname@2020}{\textup{2020} Mathematics Subject Classification}
\makeatother

\DeclareMathAlphabet{\amathbb}{U}{bbold}{m}{n}

\hypersetup{
    colorlinks = true,
    linkbordercolor = {white},
    linkcolor = {BrickRed},
    anchorcolor = {black},
    citecolor = {BrickRed},
    filecolor = {cyan},
    menucolor = {BrickRed},
    runcolor = {cyan},
    urlcolor = {black}
}

\usetikzlibrary{automata}

\newtheoremstyle{teoremas}
{9pt}
{9pt}
{\itshape}
{}
{\bfseries}
{}
{.5em}
{}

\theoremstyle{teoremas}
\newtheorem{theorem}{Theorem}[section]
\newtheorem{corollary}[theorem]{Corollary}
\newtheorem{lemma}[theorem]{Lemma}

\newtheoremstyle{definition}
{8pt}
{8pt}
{}
{}
{\bfseries}
{}
{.5em}
{}

\theoremstyle{definition}
\newtheorem{definition}[theorem]{Definition}
\newtheorem{conjecture}[theorem]{Conjecture}

\newtheorem{problem}[theorem]{Problem}

\newtheorem{remark}[theorem]{Remark}

\crefname{theorem}{theorem}{theorems}
\Crefname{theorem}{Theorem}{Theorems}
\crefname{lemma}{lemma}{lemmas}
\Crefname{lemma}{Lemma}{Lemmas}
\crefname{proposition}{proposition}{propositions}
\Crefname{proposition}{Proposition}{Propositions}

\tikzstyle{rectan} = [rectangle, rounded corners, 
minimum width=1.5cm, 
minimum height=0.75cm,
text width=3cm,
text centered, 
draw=black,
font = \normalfont,
]

\tikzstyle{ghost} = [circle, 
minimum width=1pt, 
minimum height=1pt,
text width=1pt,
text centered, 
draw=black,
font = \footnotesize,
]

\newcommand{\HA}{A}
\newcommand{\HB}{B}

\renewcommand{\H}{\mathrm{H}}

\newcommand{\G}{\mathcal{G}}

\DeclareMathOperator{\Bell}{\mathrm{Bell}}
\newcommand{\Mbar}{\overline{\mathcal{M}}}

\renewcommand{\hat}{\widehat}
\renewcommand{\emptyset}{\varnothing}

\AtBeginDocument{%
   \def\MR#1{}
}

\title[The Poincar\'e polynomial of $\Mbar_{0,n}^B$]{The Poincar\'e polynomial\\ of the type B analogue of $\Mbar_{0,n+1}$}

\author[Ferroni, Pagaria, and Vecchi]{Luis Ferroni, Roberto Pagaria, and Lorenzo Vecchi}

\address{(L.~Ferroni)
  Universit\`a di Pisa, Pisa, Italy
}
\email{luis.ferroni@unipi.it}

\address{(R.~Pagaria)
    Universit\`a di Bologna, Bologna, Italy}
\email{roberto.pagaria@unibo.it}

\address{(L. Vecchi)
  KTH Royal Institute of Technology, Stockholm, Sweden
}

\email{lvecchi@kth.se}

\begin{document}

\allowdisplaybreaks

\begin{abstract}
    We establish formulas for the Poincaré polynomial of the type B analogue of the Deligne--Knudsen--Mumford moduli space of rational curves with $n$ marked points, providing type B counterparts to results by Keel, Manin, Getzler and Yuzvinsky. We establish functional and differential equations satisfied by the bivariate exponential generating function of these polynomials. We show how this generating function relates to the classical one in type A. We deduce the $\gamma$-positivity of these polynomials via a quadratic recursion and discuss a type B analogue of a formula found by Aluffi, Marcolli and Nascimento in type A.
\end{abstract}

\subjclass[2020]{Primary: 05B35, 13D40, 14C15. Secondary: 16S37}

\keywords{Matroids, hyperplane arrangements, Chow rings, building sets, Hilbert series, log-concavity, moduli space of stable pointed curves}

\maketitle

\section{Introduction}

For each $n\geq 3$, let $\overline{\mathcal{M}}_{0,n}$ be the Deligne--Knudsen--Mumford moduli space of stable rational curves, with $n$ marked points. This moduli space is a ubiquitous object, featured in many areas of mathematics ranging from algebraic geometry to combinatorics. The cohomology of $\overline{\mathcal{M}}_{0,n}$ and, in particular, its Betti numbers have been a recurring subject of study over the past three decades \cite{keel,manin,Getzler94,yuzvinsky}. 

Recently, Aluffi, Marcolli, and Nascimento \cite{aluffi-marcolli-nascimento} have found a new and explicit formula for the Poincar\'e polynomial associated to $\Mbar_{0,n}$. Their formula is written in terms of Stirling numbers of the first and second kind.
Their formula is in fact a consequence of one of Getzler's identities, as was shown by the authors of the present paper together with Eur and Matherne \cite{eur-ferroni-matherne-pagaria-vecchi}. Indeed, it can be deduced using Lagrange inversion, Poincaré Duality and the identity \cite[Equation~(8)]{eur-ferroni-matherne-pagaria-vecchi}.

The main motivation in this article is to produce analogues of the aforementioned formulas for a ``type B'' analogue of $\Mbar_{0,n}$, which we denote by $\Mbar_{0,n}^B$. 
This can be constructed as the wonderful compactification \cite{deconcini-procesi} of the type B braid arrangement with respect to the minimal building set.
Similar generalizations have already appeared in the literature. In \cite{blankers-clader-halacheva-liu-ross}, the authors study a moduli space denoted by $\Mbar_n^r$ as the wonderful compactification of $r$-braid arrangements, of which our type B arrangement is a specific instance when $r=2$. The structure of the rational cohomology of $\Mbar_{0,n}^B(\mathbb{R})$, and more generally for other Coxeter types, has been studied in \cite{henderson-rains}.


What occupies us from now on is studying the Betti numbers of the cohomology of the moduli space $\Mbar^B_{0,n}$. Since the cohomology is concentrated only in even degree, we write the Poincaré polynomial as
\[
P_{\Mbar^B_{0,n}}(x) = \sum_{i\geq 0}\dim \H^{2i}(\Mbar_{0,n}^B, \mathbb Q)\ x^{i}.
\]

There have been only few attempts in the literature to carry an explicit computation of these Betti numbers. Notably, a result by Yuzvinsky appearing in \cite[Theorem~5.1]{yuzvinsky} yields, via a combinatorial interpretation, the first few values of $P_{\Mbar^B_{0,n}}(x)$. These are copied from \cite[p.~329]{yuzvinsky}. 

\[ P_{\Mbar^B_{0,n}}(x) = \begin{cases}
    1 & n = 1,\\
    x+1 & n = 2,\\
    x^2+8x+1 & n = 3,\\
    x^3+35x^2+35x+1 & n = 4,\\
    x^4+122x^3+482x^2+122x+1 & n = 5.
\end{cases}\]

In order to have a compact way of describing $P_{\Mbar^B_{0,n}}(x)$, we encode these polynomials in their respective exponential generating functions,

\[
A(t,x) = 1 + t + \sum_{n\geq 2}P_{\Mbar_{0,n+1}}(x)\frac{t^n}{n!}
\]
and
\[
B(t,x) = 1 + \sum_{n\geq 1}P_{\Mbar^B_{0,n}}(x)\frac{t^n}{2^n\ n!}
\]
and show how these two series are related to each other.

\begin{theorem}\label{thm:main1_func_eq_A_B}
The generating functions in type $A$ and $B$ are related by
\begin{align*}
B &= \frac{1-x}{\HA^{\frac{x-1}{2}}-x},\\
&= \frac{x \HA + \sqrt{x^2A^2 + \HA (1-x)(1+x+xt)}}{1+x+xt}.
\end{align*}
Solving for $A$ the last equation gives the equivalent form:
\[ A = \frac{(1+x+xt)B^2}{1-x+2xB}.\]
\end{theorem}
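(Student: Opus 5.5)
The plan is to compute the type B Poincaré polynomials combinatorially through nested sets of the minimal building set of the type B braid arrangement. Then I will organise the count with the exponential formula so that the type A series $A$ appears as a building block. For a wonderful model with building set $\G$, the Poincaré polynomial is a sum over nested sets $\mathcal{S}$. Each element $F\in\mathcal{S}$ contributes a factor $x(1-x^{d_F-1})/(1-x)$, where $d_F$ is the rank of $F$ relative to the join of the elements of $\mathcal{S}$ strictly below it. This is the Feichtner--Yuzvinsky monomial basis count, which is also the one behind Getzler's and Manin's recursions in type A, and I will use it as the starting formula.

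\textbf{Building set and nested sets.} I will first describe the minimal building set of the type B arrangement in $\mathbb{C}^n$. Its elements are of two kinds.
\begin{enumerate}[(i)]
\item ``Signed type A'' flats $\{x_i=\varepsilon_i\varepsilon_j x_j : i,j\in S\}$, for a subset $S$ with $|S|\geq 2$ and a sign vector $\varepsilon$ up to global sign; such a flat has rank $|S|-1$.
\item ``Zero'' flats $Z_T=\{x_i=0: i\in T\}$ for $|T|\geq 1$, of rank $|T|$.
\end{enumerate}
Since $Z_T\vee Z_{T'}=Z_{T\cup T'}$ lies in $\G$, the zero flats in a nested set must form a chain $Z_{T_1}\subsetneq\cdots\subsetneq Z_{T_k}$. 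The signed flats form a type A nested forest on signed blocks, where each tree is either disjoint from the chain or sits inside one layer $T_j\setminus T_{j-1}$.

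\textbf{Generating function.} The exponential formula then factors the sum. Outside the top zero flat, the configuration is a type A structure on $n$ signed points. The $2^{|S|-1}$ sign choices per block are exactly absorbed by the normalisation $t^n/(2^n n!)$ together with the substitution $t\mapsto t$ in $A$. Each layer of the chain contributes a factor built from $A$, weighted by $x$-dependent factors coming from the relative rank of the layer. Summing over chains of arbitrary length gives a geometric-type series, so I expect a relation of the form $B=1/(1-\Phi)$ with $\Phi$ expressed through $\log A$. Equivalently, I expect a first-order differential equation such as $\partial_t B$ being a quadratic expression in $B$ times $\partial_t A/A$. This is the form I expect the paper's later differential equations to take.

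\textbf{Solving and the main obstacle.} I will solve this equation, which should be a Riccati/Bernoulli-type equation in $B$ with coefficient $\tfrac{x-1}{2}\,\partial_t\log A$. Separating variables should yield $A^{(x-1)/2}=x+(1-x)/B$ after imposing $B(0)=A(0)=1$, which is the first displayed identity. The main obstacle is getting the layer weights exactly right: the factor $x(1-x^{d-1})/(1-x)$ for a zero flat depends on the relative rank, which mixes the layer size with the type A structure inside it. I would first check the resulting series against Yuzvinsky's values for $n\le 5$ before trusting the bookkeeping.

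\textbf{Algebraic forms.} For the second and third expressions, I will eliminate the non-algebraic power $A^{(x-1)/2}$ using the known type A functional equation for $A$ (Getzler's identity, in the form used in \cite[Equation~(8)]{eur-ferroni-matherne-pagaria-vecchi}). After clearing denominators this should reduce to the quadratic $(1+x+xt)B^2-2xAB-(1-x)A=0$, which is exactly $A=(1+x+xt)B^2/(1-x+2xB)$. Choosing the root with $B(0)=1$ gives the square-root expression.
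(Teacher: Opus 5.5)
Your setup is correct and would give a genuinely different proof from the paper's. The paper never touches nested sets: it specialises the identity $\sum_{\pi}\H_{[\hat0,\pi]}\,\overline\chi_{[\pi,\hat1]}=-1$ of \cite{eur-ferroni-matherne-pagaria-vecchi} to $\Pi^B_n$, then applies the type B compositional formula and Lemma~\ref{lemma:gen_fun_char_poly_B} to get $B\cdot\overline\chi^B(A-1,x)=-1$ at once. Your building set, your description of nested sets (a chain of zero flats plus a sign-compatible laminar family, each block inside one layer or outside the chain), and the Feichtner--Yuzvinsky weights are all right. However, the step that proves the first identity is never carried out. You only state what relation you \emph{expect}, and the one concrete bookkeeping claim you make is false. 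The $2^{|S|-1}$ sign choices of a signed tree on $S$ are not exactly absorbed by $t^n/(2^n n!)$: one factor $\tfrac12$ per tree survives. So a signed forest with $k$ trees has type B generating function $(\tfrac12\log A)^k/k!$, where $\log A$ is the series of type A trees. Consequently the region outside the chain contributes $A^{1/2}$, not $A$. Taken literally, your bookkeeping gives $B=(1-x)/(A^{x-1}-x)$, whose coefficient of $t$ is already wrong ($1$ instead of $B_1/2=\tfrac12$). The relative rank of a zero flat, which you flag as the main obstacle, is also left undetermined.

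To close the gap, note that for $Z_{T_j}$ one has $d=b_j$, the number of top-level blocks (singletons included) of the signed forest on $T_j\setminus T_{j-1}$. Its weight is therefore $(x-x^{b_j})/(1-x)$, and one layer contributes
\[
\Lambda=\sum_{k\ge1}\frac{x-x^{k}}{1-x}\,\frac{(\tfrac12\log A)^k}{k!}=1-\frac{A^{x/2}-xA^{1/2}}{1-x}.
\]
Summing over chains gives $B=A^{1/2}\sum_{k\ge0}\Lambda^k=(1-x)A^{1/2}/(A^{x/2}-xA^{1/2})$, which is the first identity; no differential equation is needed. This is essentially Yuzvinsky's computation, with $A^{1/2}$ being his $\alpha$. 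It is more elementary and self-contained, whereas the paper's route is a formal consequence of an identity valid for any building set.

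Your algebraic last step is fine, but its input should be Manin's equation $A^x-x^2A=1-x^2+(1-x)xt$ (equivalently Getzler's identity). The reference \cite[Equation~(8)]{eur-ferroni-matherne-pagaria-vecchi} is a Stirling-number identity, not this. Squaring $A^{(x-1)/2}=x+(1-x)/B$ gives $A^x=A(x+(1-x)/B)^2$; substituting into Manin's equation and dividing by $1-x$ yields $A(1-x+2xB)=(1+x+xt)B^2$. Manin's equation itself drops out of your own count in type A, since $\log A=t+\sum_{k\ge2}\frac{x-x^{k-1}}{1-x}\frac{(\log A)^k}{k!}$.
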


These identities are proved in the main body in Theorems~\ref{thm:HB_from_HA} and \ref{thm:HB_from_HA2}. After we are able to relate the generating function $B$ to the one of $A$, we can employ some well-known results in the literature to provide identities only involving $B$ without making references to $A$. For example, we have the following functional equation, which will be found later as Theorem~\ref{thm:functional-B}.

\begin{theorem}\label{thm:main3_func_eq_B}
    The generating function $\HB$ satisfies the functional equation
    \begin{equation*}
    \left( \frac{B^2(1+x+xt)}{1-x+2xB} \right)^{\frac{x-1}{2}} = \frac{1-x}{\HB} +x. 
    \end{equation*}
\end{theorem}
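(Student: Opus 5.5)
This is a direct consequence of Theorem~\ref{thm:main1_func_eq_A_B}, which we take as given. The idea is to eliminate $\HA$ between its two relations, so the argument is purely algebraic manipulation of formal power series.

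\emph{Step 1: solve the first relation for $\HA^{\frac{x-1}{2}}$.} From $\HB = \frac{1-x}{\HA^{\frac{x-1}{2}}-x}$ we get
\[
\HA^{\frac{x-1}{2}} - x = \frac{1-x}{\HB}, \qquad\text{that is,}\qquad \HA^{\frac{x-1}{2}} = \frac{1-x}{\HB} + x,
\]
which is already the right-hand side of the claimed identity. Dividing by $\HB$ is legitimate because $\HB$ has constant term $1$ and is therefore invertible in $\mathbb{Q}(x)[[t]]$. The power $\HA^{\frac{x-1}{2}}$ is defined as $\exp\bigl(\tfrac{x-1}{2}\log \HA\bigr)$, which makes sense since $\HA$ also has constant term $1$.

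\emph{Step 2: replace $\HA$ by its expression in $\HB$.} Substitute $\HA = \frac{(1+x+xt)\HB^2}{1-x+2x\HB}$, the inverted form from Theorem~\ref{thm:main1_func_eq_A_B}, into the left-hand side. This yields exactly the displayed functional equation.

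\emph{The only delicate point} is that this substitution must be valid as an identity of power series. We need the expression $\frac{(1+x+xt)\HB^2}{1-x+2x\HB}$ to coincide with $\HA$ as a series with constant term $1$, so that the same branch of the power $(\cdot)^{\frac{x-1}{2}}$ is used on both sides. Two checks handle this:
\begin{itemize}[leftmargin=*]
\item At $t=0$ the expression equals $\frac{(1+x)\cdot 1}{1-x+2x} = 1$, and the denominator $1-x+2x\HB$ has constant term $1+x$, which is invertible in $\mathbb{Q}(x)$. So the expression is a well-defined series with constant term $1$.
\item Theorem~\ref{thm:main1_func_eq_A_B} gives equality with $\HA$ as formal series. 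When solving the quadratic in the second form, one should confirm that the chosen square-root branch is the one with constant term matching $\HB(0)=1$; this is exactly what makes the inversion formula hold identically rather than merely up to branch.
\end{itemize}
Once these checks are in place, both sides are the same formal series, and the theorem follows.
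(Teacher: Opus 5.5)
Your proof is correct and is essentially the paper's: both eliminate $A$ by substituting $A=\frac{(1+x+xt)B^2}{1-x+2xB}$ into a relation between $A$ and $B$. Your branch concern is unnecessary, since that expression comes from the relation $(1+x+xt)B^2-2xAB+(x-1)A=0$, which is linear in $A$, and a power $(\cdot)^{\frac{x-1}{2}}$ of a series with constant term $1$ is uniquely defined. The one difference is that the paper substitutes into $B=\frac{A^{\frac{x+1}{2}}+xA}{1+x+xt}$, which after dividing by $A$ gives the same identity $A^{\frac{x-1}{2}}=\frac{1-x}{B}+x$ that you read off directly from the first relation.
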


Furthermore, by leveraging well-known differential equations that $A$ satisfies, one can express $B$ as a solution to a differential equation as well. The next result is established as Theorem~\ref{thm:diff-eq-gf-B}.

\begin{theorem}\label{thm:main2_diff_eq_A_B}
The exponential generating function $B$ satisfies the differential equation
\begin{equation*}
\frac{\partial \HB}{\partial t} = \frac{1}{2} \frac{B-xB+xB^2}{1+x+xt-xA}
\end{equation*}
with initial condition $B(0,x)=1$.
\end{theorem}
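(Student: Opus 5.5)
The plan is to derive the differential equation for $\HB$ from Theorem~\ref{thm:main1_func_eq_A_B} together with the known differential equation for $\HA$. Differentiating the implicit relation $\HA = (1+x+xt)\HB^2/(1-x+2x\HB)$ with respect to $t$ gives a linear relation between $\partial_t\HA$ and $\partial_t\HB$; solving it expresses $\partial_t \HB$ through $\HA$, $\HB$ and $\partial_t\HA$. For $\partial_t\HA$ I will use the classical type A equation (Keel, Manin, Getzler), written in the normalization of $\HA$ used here. With $\HA = 1+t+\sum_{n\ge2}P_{\Mbar_{0,n+1}}(x)t^n/n!$, it reads
\[
\frac{\partial \HA}{\partial t} = \frac{\HA}{1+x+xt-x\HA}, \qquad \HA(0,x)=1.
\]
This equation can be checked against $n=2,3$ at low order: it gives $\HA = 1 + t + (1+x)t^2/2+\cdots$, and $P_{\Mbar_{0,4}}=1+x$.

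To avoid the square root, it is easier to use the first form $\HB = (1-x)/(\HA^{(x-1)/2}-x)$. Taking the logarithmic derivative, with $u=\HA^{(x-1)/2}$, gives
\[
\frac{\partial_t \HB}{\HB} = -\frac{\partial_t u}{u-x} = -\frac{x-1}{2}\,\frac{u}{u-x}\,\frac{\partial_t \HA}{\HA}.
\]
Next I substitute $u = x + (1-x)/\HB$. This yields $u/(u-x) = (x\HB+1-x)/(1-x)$, so
\[
\frac{\partial_t\HB}{\HB} = \frac{1}{2}\,(1-x+x\HB)\,\frac{\partial_t\HA}{\HA}.
\]
Inserting the type A equation, $\partial_t\HA/\HA = 1/(1+x+xt-x\HA)$, gives exactly
\[
\frac{\partial\HB}{\partial t}=\frac12\,\frac{\HB-x\HB+x\HB^2}{1+x+xt-x\HA}.
\]
The initial condition $\HB(0,x)=1$ is immediate from the definition, or from $\HA(0,x)=1$ in the formula.

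The main obstacle is fixing the correct form and normalization of the type A differential equation for this particular $\HA$. The shift by the term $1+t$ and the indexing by $n+1$ must match the denominator $1+x+xt-x\HA$. I would verify this by comparing low-order coefficients with the listed Poincaré polynomials, and if needed rederive it from Getzler's/Manin's equation by a change of variables. The remaining steps are routine algebra.
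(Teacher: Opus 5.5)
Your argument is correct and is essentially the paper's proof. The paper also differentiates $(\HA^{\frac{x-1}{2}}-x)\HB=1-x$, uses this relation again to replace $\HA^{\frac{x-1}{2}}$ by $x+(1-x)/\HB$, and then invokes Manin's equation \eqref{eq:Manin diff A}; your logarithmic derivative is just the paper's ``multiply by $\HA\HB$'' step in different form. Manin's equation is exactly the type A equation you wrote, so no renormalization is needed.

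One small slip in your sanity check: the expansion is $\HA=1+t+\frac{t^2}{2}+(1+x)\frac{t^3}{3!}+\cdots$. This is because $P_{\Mbar_{0,3}}=1$, and $1+x=P_{\Mbar_{0,4}}$ is the coefficient of $t^3/3!$, not of $t^2/2$.
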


Finally, Theorem \ref{thm:main2_diff_eq_A_B} lets us provide a nice quadratic recursion for $B_n(x) \coloneq P_{\Mbar^B_{0,n}}(x)$ in terms of $A_n(x) \coloneq P_{\Mbar_{0,n+1}}(x)$ (see later Theorem~\ref{thm:quadratic-recursion}).

\begin{theorem}\label{thm:main4_quadratic_recursion}
The following recursive relation holds:
\begin{equation*}
B_{n+1}=B_n + x \sum_{j=1}^{n} \binom{n}{j} B_{j} B_{n-j} + x \sum_{j=2}^{n} \binom{n}{j}2^j A_{j} B_{n+1-j},
\end{equation*}
with initial condition $B_0=1$.
\end{theorem}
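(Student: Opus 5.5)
The plan is to read the recursion off coefficient by coefficient from the differential equation in Theorem~\ref{thm:main2_diff_eq_A_B}. First clear the denominator:
\[
2\,\frac{\partial B}{\partial t}\,\bigl(1+x+xt-xA\bigr) = B - xB + xB^2 .
\]
To get rid of the factors $2^n$ in the definition of $B$, I will substitute $t=2u$. Then $B=\sum_{n\ge 0}B_n\frac{u^n}{n!}$ (with $B_0=1$) and $2\,\partial_t=\partial_u$.

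Next I expand $A$ in the new variable:
\[
A(2u)=1+2u+\sum_{j\ge 2}2^jA_j\frac{u^j}{j!}.
\]
The constant and linear terms cancel against $1+x+2xu$, which gives
\[
1+x+xt-xA = 1-x\sum_{j\ge 2}2^jA_j\frac{u^j}{j!}.
\]
So the equation becomes
\[
\partial_u B = B - xB + xB^2 + x\,(\partial_u B)\sum_{j\ge2}2^jA_j\frac{u^j}{j!}.
\]

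Now I extract the coefficient of $u^n/n!$ on both sides, using the binomial convolution rule for products of exponential generating functions. The left side gives $B_{n+1}$. On the right:
\begin{itemize}
\item $B-xB$ contributes $B_n-xB_n$;
\item $xB^2$ contributes $x\sum_{j=0}^n\binom{n}{j}B_jB_{n-j}$;
\item the last product contributes $x\sum_{j=2}^{n}\binom{n}{j}2^jA_jB_{n+1-j}$, since $\partial_uB$ has coefficients $B_{m+1}$.
\end{itemize}

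Finally, the $j=0$ term of the quadratic sum is $xB_0B_n=xB_n$, and it cancels the $-xB_n$. What remains is exactly the stated recursion, with the quadratic sum running over $j=1,\dots,n$. The initial condition $B_0=1$ is equivalent to $B(0,x)=1$.

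The only step needing care is this index bookkeeping: the shift in $\partial_uB$, the cancellation of the $j=0$ term, and checking that the $j=0,1$ terms of $A$ vanish from the denominator, which is why the last sum starts at $j=2$. Beyond that, the argument is a routine coefficient comparison.
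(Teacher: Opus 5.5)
Your proof is correct and follows essentially the same route as the paper: clear the denominator in the differential equation of Theorem~\ref{thm:diff-eq-gf-B} and compare coefficients via the binomial convolution rule. The only cosmetic difference is that your substitution $t=2u$ cancels the constant and linear terms of $A$ against $1+x+xt$ at the level of generating functions. The paper instead keeps those terms and cancels $xB_{n+1}$ and $2nxB_n$ after extracting coefficients, using $A_0=A_1=1$.
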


We can use Theorem \ref{thm:main4_quadratic_recursion}, to readily prove the following corollary, which in the type A case was noted explicitly in recent work by Aluffi, Chen and Marcolli \cite[Theorem~1.2]{aluffi-marcolli-chen}.

\begin{corollary}\label{cor:main5_gamma_positive}
The polynomials $B_n(x)$ are $\gamma$-positive for every $n\geq 0$.
\end{corollary}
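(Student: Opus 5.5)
We argue by strong induction on $n$, using the quadratic recursion of Theorem~\ref{thm:main4_quadratic_recursion} together with the $\gamma$-positivity of the type A polynomials $A_j(x)=P_{\Mbar_{0,j+1}}(x)$, which holds for $j\geq 2$ by \cite[Theorem~1.2]{aluffi-marcolli-chen}.

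We use the standard facts about palindromic polynomials with centre of symmetry $d/2$. Say a polynomial is $\gamma$-positive of degree $d$ if it lies in the nonnegative span of the elements $x^i(1+x)^{d-2i}$ for $0\le i\le \lfloor d/2\rfloor$. Then:
\begin{enumerate}[(i)]
\item If $f$ is $\gamma$-positive of degree $d$ and $g$ is $\gamma$-positive of degree $e$, then $fg$ is $\gamma$-positive of degree $d+e$. This is immediate, because products of basis elements are basis elements.
\item Nonnegative combinations of $\gamma$-positive polynomials of the same degree are $\gamma$-positive of that degree.
\item The monomial $x$ is $\gamma$-positive of degree $2$, since $x=x^1(1+x)^0$.
\end{enumerate}

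Since $\Mbar_{0,n}^B$ has complex dimension $n-1$, we take $B_n$ to have degree $n-1$ for $n\geq 1$. This matches the listed examples: $B_1=1$, $B_2=1+x$, $B_3=(1+x)^2+6x$. For $B_0=1$ we need to assign degree $-1$. This cannot be done in the usual sense, so $B_0$ must be handled separately, and this is the main bookkeeping obstacle. Likewise $A_j$ has degree $j-2$. Our goal is to show that every summand of the recursion for $B_{n+1}$ is $\gamma$-positive of degree $n$.

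We go through the summands of $B_{n+1}$ one by one.
\begin{itemize}
\item The term $B_n$ has degree $n-1$, not $n$, so it cannot be treated alone. It must be paired with the $j=n$ term of the first sum, which is $xB_nB_0=xB_n$. Together they give $(1+x)B_n$, which is $\gamma$-positive of degree $n$ by (i) and induction.
\item For $1\le j\le n-1$, the first sum contributes $x\binom{n}{j}B_jB_{n-j}$. Its degree is $2+(j-1)+(n-j-1)=n$, so it is $\gamma$-positive of degree $n$ by (i), (ii), (iii) and the induction hypothesis.
\item In the second sum, take $2\le j\le n$, so that $1\le n+1-j\le n-1$. The term $x\binom{n}{j}2^jA_jB_{n+1-j}$ has degree $2+(j-2)+(n-j)=n$. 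It is $\gamma$-positive by the type A result, by induction, and by (i).
\end{itemize}
Summing these with (ii) shows that $B_{n+1}$ is $\gamma$-positive of degree $n$.

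For the base cases, $B_0=1$ is trivially $\gamma$-positive as a constant, and $B_1=1$ is $\gamma$-positive of degree $0$. The only delicate point, already flagged above, is checking that the $j=n$ term of the first sum uses $B_0=1$ and combines exactly with the lone $B_n$ term. Everything else is degree counting.
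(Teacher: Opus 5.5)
Your proof is correct and follows the paper's argument essentially verbatim. The paper also absorbs the $j=n$ term $xB_nB_0$ into the lone $B_n$ to get $(1+x)B_n$, checks that every remaining summand is palindromic with centre $n/2$ using $\deg B_j=j-1$ and $\deg A_j=j-2$, and concludes by strong induction together with the type A $\gamma$-positivity of Aluffi, Chen and Marcolli; your basis $x^i(1+x)^{d-2i}$ is just a more explicit form of the same degree bookkeeping.
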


Note that the Hard Lefschetz theorem on the cohomology $\H^{\bullet}(\Mbar_{0,n}^B)$ guarantees the unimodality of the polynomials $B_n$. Noticing that $\gamma$-positive implies the (and is much stronger than) unimodality. In fact, we see here that the quadratic recursion allow us to access this stronger property of the sequence of Betti numbers of $\Mbar_{0,n}^B$.

A non-recursive formula for $B_n(x)$, which in a precise way can be regarded as an analogue of a type A formula by Aluffi, Marcolli, and Nascimento \cite{aluffi-marcolli-nascimento}, can be obtained from Theorem \ref{thm:main1_func_eq_A_B} as follows using \emph{Bell polynomials}.

\begin{theorem}
Let $\overline{\chi}^B_n$ denote the reduced characteristic polynomial of the $n$-dimensional type B braid arrangement. Then
\[
    \HB_n(x) = \sum_{k=1}^n 2^{n-k}\left( \sum_{\ell=0}^k \ell! \Bell_{k,\ell} (\overline{\chi}_{1}^B, \overline{\chi}_{2}^B, \dots ) \right)  \Bell_{n,k} (A_1 ,A_2, \dots).
\]
\end{theorem}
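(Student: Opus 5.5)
The plan is to read the identity as a Faà di Bruno expansion of the composite function in the first formula of Theorem~\ref{thm:main1_func_eq_A_B}, namely $B=\frac{1-x}{A^{\frac{x-1}{2}}-x}$. Since $A$ is an exponential generating function in $t$ with $A_1=1$, I write $A=1+u(t)$ with $u(t)=\sum_{n\ge1}A_n\frac{t^n}{n!}$. Then
\[
B=G(u),\qquad G(u)=\frac{1-x}{(1+u)^{\frac{x-1}{2}}-x}=\frac{1}{1-H(u)},\qquad H(u)=\frac{1-(1+u)^{\frac{x-1}{2}}}{1-x}.
\]
Writing $G(u)=\sum_k G_k\frac{u^k}{k!}$, the composition formula gives
\[
\frac{B_n}{2^n}=\sum_{k}G_k\Bell_{n,k}(A_1,A_2,\dots).
\]
Because of the $2^n$ in the normalization of $B$, this becomes $B_n=\sum_k 2^{n-k}\,(2^kG_k)\,\Bell_{n,k}(A_1,A_2,\dots)$. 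The term $k=0$ vanishes for $n\ge1$.

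Next I compute $2^kG_k$. Since $H(0)=0$, we have $G=\sum_{\ell\ge0}H^\ell=\sum_\ell \ell!\,\frac{H^\ell}{\ell!}$. By the defining property of partial Bell polynomials, $\frac{H^\ell}{\ell!}=\sum_k \Bell_{k,\ell}(H_1,H_2,\dots)\frac{u^k}{k!}$, where $H_j$ denotes the $j$-th Taylor coefficient of $H$. It follows that $G_k=\sum_{\ell=0}^k\ell!\,\Bell_{k,\ell}(H_1,H_2,\dots)$. Bell polynomials satisfy the homogeneity $\Bell_{k,\ell}(c\,a_1,c^2a_2,\dots)=c^k\Bell_{k,\ell}(a_1,a_2,\dots)$, so taking $c=2$ gives
\[
2^kG_k=\sum_{\ell}\ell!\,\Bell_{k,\ell}(2H_1,4H_2,\dots).
\]

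It remains to identify $2^jH_j$ with $\overline\chi^B_j$. By the binomial series,
\[
H_j=-\frac{1}{1-x}\prod_{i=0}^{j-1}\Bigl(\tfrac{x-1}{2}-i\Bigr)=\tfrac12\prod_{i=1}^{j-1}\Bigl(\tfrac{x-1}{2}-i\Bigr),
\]
so $2^jH_j=\prod_{i=1}^{j-1}(x-1-2i)=(x-3)(x-5)\cdots(x-2j+1)$. The type B braid arrangement in dimension $j$ has characteristic polynomial $(x-1)(x-3)\cdots(x-2j+1)$, and dividing by $x-1$ gives exactly this product. Substituting back yields the stated formula.

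The only step I expect to need care is this last matching of conventions. If the paper normalizes the reduced characteristic polynomial with alternating signs, or in reversed variables, I will adjust by the homogeneity of $\Bell_{k,\ell}$ in the variables. For instance, a sign $(-1)^{j-1}$ on each $\overline\chi^B_j$ contributes a factor $(-1)^{k-\ell}$, which must then be absorbed consistently. I will also check the formula against Yuzvinsky's values for $n=2,3$, for example $B_2=x+1$.
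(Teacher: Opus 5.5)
Your proof is correct and is essentially the paper's argument: the paper writes $B=-1/\overline{\chi}^B(A-1,x)$, expands the reciprocal via Bell polynomials (Theorem~\ref{thm:reciprocal_series}) and then applies the type B compositional formula (Corollary~\ref{coro:compositional_formula_type_B}). Your geometric-series expansion of $1/(1-H)$ with $H=1+\overline{\chi}^B$, followed by the homogeneity rescaling $\Bell_{k,\ell}(2H_1,4H_2,\dots)=2^k\Bell_{k,\ell}(H_1,H_2,\dots)$, carries out exactly these two steps, and your identification $2^jH_j=\overline{\chi}^B_j$ matches the paper's convention with no sign adjustment needed, as your check $B_2=x+1$ confirms.
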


The above statement corresponds to Theorem~\ref{thm:iterative-B}.

\section{The lattice of signed partitions}
In this section we review the poset-theoretic model that we will use to start studying the cohomology of $\Mbar_{0,n}^B$. 
The rank $n$ braid arrangement of type A in $\mathbb{C}^{n+1}$, denoted by $\mathcal A_{n}$, is the arrangement consisting of all the hyperplanes
\[
H_{ij} \coloneq \left\{x \in \mathbb C^{n+1} : x_i = x_j \right\} \qquad \text{for each $1\leq i < j \leq n+1$}.
\]
The moduli space $\Mbar_{0,n+1}$ can be described as the wonderful compactification of $\mathcal A_n$ with respect to the minimal building set. Since we use the geometric viewpoint solely as motivation, we refer to \cite{deconcini-procesi} for more details about wonderful compactifications.
An important combinatorial invariant of a hyperplane arrangement $\mathcal{A}$ is its \emph{lattice of intersections} $\mathcal{L}(\mathcal{A})$, i.e., the poset of all subspaces of the ambient space that can be written as the intersection of hyperplanes in $\mathcal A$ ordered by reverse inclusion. It is well known that the lattice of intersections of $\mathcal A_n$ is isomorphic to the partition lattice $\Pi^A_n$, i.e., the poset of all partitions of $[n]$ ordered by coarsening.

We want to consider a similar combinatorial model for $\Mbar_{0,n}^B$.
The rank $n$ braid arrangement of type B in $\mathbb{C}^{n}$, denoted by $\mathcal B_n$, is the arrangement consisting of all the hyperplanes
        \begin{align*} 
            H^{\pm}_{ij} := \left\{ x\in \mathbb{C}^n : x_i \pm x_j = 0\right\} \qquad &\text{for each $1\leq i < j \leq n$},\\
            H_{ii} := \left\{ x\in \mathbb{C}^n : x_i = 0\right\} \qquad &\text{for each $1\leq i \leq n$}.
        \end{align*}
The moduli space $\Mbar_{0,n}^B$ can be described as the wonderful compactification of this hyperplane arrangement with respect to the minimal building set. 
In the case of the type $B$ braid arrangement the lattice of intersections is isomorphic to the lattice of signed partitions, which we now describe.

Let $\langle n \rangle$ be the set $\{1,2, \dots, n, \overline{1}, \dots, \overline{n}\}$ obtained as the disjoint union of two copies of the integers between $1$ and $n$.
This set has a natural involution $i\mapsto \overline{i}$, where we set $\overline{\overline{i}}:= i$ for each $i\in \{1,\ldots,n\}$. For each subset $B \subseteq \langle n \rangle$ we define the \emph{conjugate} of $B$ as the set $\overline{B}= \{ \overline{i} \mid i \in B\}$.

\begin{definition}
    A \emph{signed partition} $\pi$ is a partition of the set $\langle n \rangle$ such that for each block $B \in \pi$ the conjugate block $\overline{B}$ is in $\pi$ and there exists at most one block $Z \in \pi$ such that $Z=\overline{Z}$.
\end{definition}
If $\pi$ is a signed partition and $Z$ is a block of $\pi$ such that $Z=\overline{Z}$, then we call $Z$ the \emph{zero block} of $\pi$. We will also denote by $\ell(\pi)$ half the number of non-zero blocks. In this way, $\pi = \{ Z, B_1, \dots, B_{\ell(\pi)}, \overline{B}_1, \dots , \overline{B}_{\ell(\pi)} \}$.
We write $\sigma \vdash [ n ]$ for $\sigma$ a set partition of $[n]:=\{1,\ldots,n\}$ and $\pi \vdash \langle n \rangle$ for $\pi$ a signed partition of $\langle n\rangle$.

\begin{definition}
Let $n\geq 1$. The poset of signed partitions of $\langle n\rangle$ ordered by coarsening is called the lattice of signed partitions, denoted by $\Pi^B_n$.
\end{definition}

The isomorphism between the poset of intersections and $\Pi^B_n$ is the following: to a given signed partition $\pi = \left\{Z, B_1,\ldots, B_{k},\overline{B}_1,\ldots, \overline{B}_k \right\}$ we associate the subspace $X(\pi)$ described by the equations
\[\begin{cases}
x_i = 0, &  i \in Z \\
x_i = x_j, & \text{if } i, j \text{ are in the same block,}\\
x_i = -x_j, & \text{if } i, \overline{j} \text{ are in the same block.}
\end{cases}
\]
The minimal element of $\Pi^B_n$, denoted by $\hat{0}$ is then given by the signed partition with $2n$ singletons, while the maximal element $\hat{1}$ is given by $\left\{\langle n\rangle\right\}$. For a given signed partition $\pi = \left\{Z, B_1, \ldots, B_k, \overline{B}_1, \ldots, \overline{B}_k \right\}$, we observe that
\begin{equation}\label{eq:intervals_PiB}
\left[\hat{0},\pi \right] \cong \Pi^B_{\frac{|Z|}{2}} \times \Pi^A_{|B_1|} \times \cdots \times \Pi^A_{|B_k|}
\qquad \text{and} \qquad
\left[\pi,\hat{1}\right] \cong \Pi^B_{k}.
\end{equation}
It is also well known that the characteristic polynomial of $\Pi^B_n$ is 
\[
\chi^B_n(x) = \prod_{i=0}^{n-1} (x- (2i+1))
\]
and $\overline{\chi}^B_n(x) = \frac{\chi^B_n(x)}{x-1}$ is its reduced characteristic polynomial. We extend this definition by setting $\overline{\chi}^B_0 \coloneq -1$. This agrees with the choice made in \cite{ferroni-matherne-vecchi}.
As presented in the introduction, we will henceforth use the following notation to denote the Poincar\'e polynomials of the cohomologies of $\Mbar_{0,n+1}$ and its type B counterpart $\Mbar^B_{0,n}$. 
\[A_n(x) \coloneq P_{\Mbar_{0,n+1}}(x) \qquad \text{ and } \qquad B_n(x) \coloneq P_{\Mbar_{0,n}^B}(x).\]
(The reader is warned to bear in mind the difference in the indexing between these two Poincar\'e polynomials.) Whenever needed, we will allude to the cohomologies themselves by the notation $\H^{\bullet}(\Mbar_{0,n+1})$ and $\H^{\bullet}(\Mbar^B_{0,n})$, respectively.

In a prequel paper, that we wrote together with Eur and Matherne \cite{eur-ferroni-matherne-pagaria-vecchi}, we proved a general formula that describes the Poincar\'e polynomial of the cohomology of the wonderful compactification of an arrangement using an arbitrary building set---moreover, the framework is much more general and works even for polymatroids. In the context of the present paper, we may specialize  \cite[Theorem~3.6]{eur-ferroni-matherne-pagaria-vecchi} in order to obtain the following formula. 

\begin{theorem}\label{thm:B_n_as_chow_polynomial}
    For every $n\geq 1$,
    \[
    \sum_{j=0}^n \binom{n}{j} B_{j}(x) \sum_{\substack{\sigma \vdash [n-j]}}2^{n-j-\ell(\sigma)}\prod_{i=1}^{\ell(\sigma)}A_{|B_i|}(x)\ \overline{\chi}^B_{\ell(\sigma)}(x) = -1 .
    \]
\end{theorem}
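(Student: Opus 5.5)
The plan is to specialize \cite[Theorem~3.6]{eur-ferroni-matherne-pagaria-vecchi} to the lattice $\Pi^B_n$ with its minimal building set, and then regroup the resulting sum according to the combinatorics of signed partitions. In the form we use, that theorem says the following. For a geometric lattice $\mathcal{L}$ with building set $\mathcal{G}$, the Chow polynomial $H_{\mathcal{L}}$ satisfies
\[
\sum_{F\in\mathcal{L}} H_{[\hat0,F]}(x)\,\overline{\chi}_{[F,\hat1]}(x) = -1 .
\]
Here $H_{[\hat0,F]}$ is the Chow polynomial of the lower interval with the induced building set, and it factors over the $\mathcal G$-factors of $F$. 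We take $\overline{\chi}$ of a one-point interval to be $-1$; this is the convention $\overline{\chi}^B_0=-1$ adopted above. If the cited statement is instead phrased with $\chi$ or with the opposite orientation of intervals, we would first rewrite it into this form using Poincaré duality $x^{\mathrm{rk}}H(1/x)=H(x)$.

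Next we identify every term for $\mathcal L=\Pi^B_n$. Take a signed partition $\pi=\{Z,B_1,\dots,B_k,\overline B_1,\dots,\overline B_k\}$ with $|Z|=2j$.
\begin{itemize}
\item By \eqref{eq:intervals_PiB}, the upper interval is $[\pi,\hat1]\cong\Pi^B_k$, so the right factor is $\overline{\chi}^B_k(x)$.
\item By \eqref{eq:intervals_PiB}, the lower interval is $[\hat0,\pi]\cong\Pi^B_j\times\prod_i\Pi^A_{|B_i|}$.
\item The minimal building set of $\Pi^B_n$ consists of the irreducible flats. These are the signed partitions with exactly one nontrivial block pair $\{B,\overline B\}$ or nontrivial zero block. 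Hence the induced building set on $[\hat0,\pi]$ is the product of the minimal building sets of the factors.
\item The Chow polynomial is multiplicative over such products. The factors are the wonderful models $\Mbar^B_{0,j}$ and $\Mbar_{0,|B_i|+1}$, so $H_{[\hat0,\pi]}=B_j(x)\prod_i A_{|B_i|}(x)$.
\end{itemize}

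It remains to count signed partitions with a given shape. First choose the underlying set of $j$ indices for the zero block; the zero block is then $S\cup\overline S$, giving $\binom nj$ choices. The other $n-j$ indices are split into a set partition $\sigma\vdash[n-j]$ with $\ell(\sigma)=k$ blocks, recording which absolute values lie together. Each block of $\sigma$ of size $m$ can be lifted to a pair $\{B,\overline B\}$ in $2^{m-1}$ ways: this is the number of sign patterns up to global conjugation. The total number of lifts is therefore $2^{n-j-\ell(\sigma)}$. Summing over $j$ and $\sigma$ gives exactly the displayed identity. The boundary cases $j=n$, where the term is $B_n\cdot\overline\chi^B_0$, and $j=0$ use $B_0=1$ and $A_1=1$; these conventions match the generating functions defined in the introduction.

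The main obstacle is the first step: making sure the general formula in the prequel is applied with the correct indexing conventions. This includes $A_m=P_{\Mbar_{0,m+1}}$ for $\Pi^A_m$, the treatment of rank-zero intervals and of $\overline\chi_0$, and the fact that the lower-interval factor carries the Chow polynomial rather than its augmented variant. I would check these conventions against the values quoted for $n=1,2$: from $B_1=1$ and $B_2=x+1$ the identity should reproduce $-1$.
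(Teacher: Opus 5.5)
Your route is the paper's route. You specialize \cite[Theorem~3.6]{eur-ferroni-matherne-pagaria-vecchi} to $\Pi^B_n$ with $\mathcal G_{\min}$, read off both factors from \eqref{eq:intervals_PiB} and the multiplicativity of the Chow polynomial, and count signed partitions of a given shape as $\binom nj$ times $2^{n-j-\ell(\sigma)}$ lifts of each $\sigma\vdash[n-j]$. Your description of the irreducible flats, the induced building sets, and the count of $2^{m-1}$ lifts per block are all correct.

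What is wrong is the constant on the right-hand side, and the sanity check you propose would have caught it.
\begin{itemize}
\item For $n=1$ the only signed partitions are $\hat 0$ and $\hat 1$. They contribute $1\cdot\overline\chi^B_1=1$ and $B_1\cdot\overline\chi^B_0=-1$, so the sum is $0$, not $-1$.
\item For $n=2$ the terms are $(x-3)+2$ from $j=0$, then $2$ from $j=1$, then $-(x+1)$ from $j=2$. Again the sum is $0$.
\end{itemize}
With the normalization $\overline\chi=-1$ on a one-point interval, $\sum_F H_{[\hat 0,F]}\overline\chi_{[F,\hat 1]}$ vanishes for $\Pi^B_n$ whenever $n\geq 1$. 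In rank one this is just $1\cdot 1+1\cdot(-1)=0$. The value $-1$ occurs only for the one-point lattice. So you should quote the cited identity as ``$=0$'' in positive rank. The displayed identity then holds with right-hand side $0$ for $n\ge 1$; the value $-1$ is the $n=0$ term $B_0\overline\chi^B_0$.

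The paper's statement carries the same misprint, but the corrected form is what the paper actually uses afterwards. By Theorem~\ref{thm:B_compositional_formula}, the identity $B\cdot\overline\chi^B(A-1,x)=-1$ of Theorem~\ref{thm:HB_from_HA} says exactly that the $n$-th coefficient is $-1$ for $n=0$ and $0$ for $n\geq 1$. A value of $-1$ for every $n$ would instead produce $-e^{t/2}$. Your proof goes through once the right-hand side is corrected to $0$.
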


\begin{proof}
    By \cite[Theorem~3.6]{eur-ferroni-matherne-pagaria-vecchi} and using the notation introduced there we can write 
    \[
    \sum_{\pi \in \Pi^B_n} \H^{\mathcal G_{\min}|_{\pi}}_{[\hat{0},\pi]}(x) \overline{\chi}^{\mathcal G_{\min}/\pi}_{[\pi,\hat{1}]}(x) = -1.
    \]
    We now recall that a signed partition $\pi$ is determined by the choice of the zero block $Z$ of size $j$, a partition $\sigma$ of the remaining $n-j$ elements and $2^{n-j-\ell(\sigma)}$ signs. 
    We conclude by \eqref{eq:intervals_PiB} and after observing that $\H^{\mathcal G_{\min}}_{P\times Q}(x) = \H^{\mathcal G_{\min}|_P}_{P}(x)\cdot\H^{\mathcal G_{\min}|_Q}_{Q}(x) $.
\end{proof}

\section{A toolkit on generating functions}
We now recall some classical facts on generating functions. 
Let $R$ be a ring containing $\mathbb{Q}$, and consider a function $f\colon \mathbb{Z}_{\geq 0}\to R$. We define the \emph{type $B$} exponential generating function of $f$ as the formal power series
\[ F^B(t) \coloneqq \sum_{n \geq 0} f(n) \frac{t^n}{2^n n!} \in R[[t]].\]
Note that the classical exponential generating function (from now on alluded to as the type A generating function), defined by
\[ F^A(t) \coloneqq \sum_{n \geq 0} f(n) \frac{t^n}{n!} \in R[[t]]\]
encodes exactly the same information as $F^B(t)$ via the equation $F^B(2t) = F^A(t)$.

Recall that for fixed integers $n$ and $k$, the \emph{Bell polynomial} $\Bell_{n,k}\in \mathbb{Q}[y_1,\ldots,y_{n-k+1}]$ is defined as
\[ \Bell_{n,k}(y_1,\ldots,y_{n-k+1}) = \sum_{\substack{\sigma \vdash [n] \\ \ell(\sigma)=k}} \prod_{i=1}^{k} y_{|\sigma_i|},\]
where the above sum runs over all set partitions of $[n]$ with exactly $k$ parts. For example, for $n = 6$ and $k = 2$, we consider the  partitions of the set $\{1,2,3,4,5,6\}$ into two parts: $10$ such partitions correspond to two blocks of size $3$, $15$ partitions correspond to two blocks of sizes $2$ and $4$ respectively, and $6$ partitions correspond to a singleton plus a block of size~$5$. Thus $\Bell_{6,2}(y_1,y_2,y_3,y_4,y_5) = 10y_3^2+15y_2y_4+6y_1y_5$.

Bell polynomials can be used to provide compact expressions for the composition of two generating functions. 

\begin{theorem}[The compositional formula]
\label{thm:compositional_formula}
Let $F(t)=\sum_{n \geq 1} f_n \frac{t^n}{n!}$ and $G(t)=\sum_{n \geq 0} g_n \frac{t^n}{n!}$  be two series in $R[[t]]$ such that $F(0) = 0$. Then the composition $G(F(t)) =\colon H(t) = \sum_{n\geq 0 }h_n \frac{t^n}{n!}$ has coefficients
\[
h_n = \sum_{k=0}^ng_k \Bell_{n,k} \left(f_1, f_2, \ldots, f_{n-k+1} \right)
\]
for $n\geq 0$.
\end{theorem}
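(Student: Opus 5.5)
The plan is a direct coefficient comparison in the composite series $G(F(t))$. I will expand the composite, collect the coefficient of $t^n/n!$, and check that this coefficient equals the stated Bell polynomial sum. I will work with formal power series over $R$, so that composition is well defined because $F(0)=0$: each power $F(t)^k$ has order at least $k$ in $t$.

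The first step is to write
\[
G(F(t))=\sum_{k\geq 0} \frac{g_k}{k!}\,F(t)^k .
\]
Since $F(t)^k$ involves only powers $t^m$ with $m\geq k$, the coefficient of $t^n$ receives contributions only from $k\leq n$. Each coefficient is therefore a finite sum, and the formula for $h_n$ reduces to the following claim about a single power of $F$:
\[
\frac{F(t)^k}{k!}=\sum_{n\geq k}\Bell_{n,k}(f_1,f_2,\ldots)\frac{t^n}{n!}.
\]

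The second step, which is the main one, proves this claim by expanding the product of $k$ copies of $F$. The coefficient of $t^n/n!$ in $F^k$ is
\[
\sum_{\substack{m_1+\cdots+m_k=n\\ m_i\geq 1}}\binom{n}{m_1,\ldots,m_k} f_{m_1}\cdots f_{m_k}.
\]
This multinomial sum counts ordered set partitions $(S_1,\ldots,S_k)$ of $[n]$ into $k$ nonempty blocks with $|S_i|=m_i$. Each such ordered partition carries the weight $\prod_i f_{|S_i|}$.

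The next point is that forgetting the order of the blocks is exactly $k!$-to-one. This holds because the blocks are nonempty and disjoint, hence pairwise distinct. Dividing by $k!$ therefore gives the sum over unordered partitions of $[n]$ into $k$ parts, which is $\Bell_{n,k}(f_1,\ldots,f_{n-k+1})$ by the definition above. The variables only go up to $f_{n-k+1}$, since a block in a partition of $[n]$ into $k$ blocks has size at most $n-k+1$.

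The final step substitutes this into the expansion of $G(F(t))$, giving $h_n=\sum_{k=0}^n g_k\Bell_{n,k}(f_1,\ldots,f_{n-k+1})$. The edge case $k=0$ needs a convention: $F^0=1$, and $\Bell_{n,0}=\delta_{n,0}$, since the empty set has exactly one partition, namely the one with no blocks. With this convention $h_0=g_0$, as it should be.

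I expect no real obstacle in this argument. The only points needing care are the $k!$-to-one counting argument and the convention for $\Bell_{0,0}$.
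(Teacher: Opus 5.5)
Your proof is correct. You write $G(F(t))=\sum_{k\ge 0} g_k F(t)^k/k!$ and identify the coefficient of $t^n/n!$ in $F(t)^k/k!$ with $\Bell_{n,k}(f_1,\ldots,f_{n-k+1})$ via ordered set partitions and the $k!$-to-one forgetful map; the paper gives no proof of its own, only citing Stanley (EC2, Theorem~5.1.4) and Comtet's Fa\`a di Bruno formula, and your argument is essentially the standard proof behind those references.
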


The above formulation is equivalent to the statement \cite[Theorem~5.1.4]{stanley-ec2}. For this precise version, which is known as (a special case of) the \emph{Fa\`a di Bruno formula}, we refer to \cite[Theorem~A, p.~137]{comtet}. 

\begin{theorem}[Lagrange inversion formula]
\label{thm:Lagrange_inversion}
Let $F(t)=\sum_{n \geq 1} f_n \frac{t^n}{n!}$ and $G(t)=\sum_{n \geq 1} g_n \frac{t^n}{n!}$  be two series in $R[[t]]$ such that $f_1 \neq 0$ and $G(F(t))=t$. Then 
\[g_n = \frac{1}{f_1^n}\sum_{k=1}^{n-1} \Bell_{n+k-1,k} \left( 0, -\frac{f_2}{f_1}, -\frac{f_3}{f_1}, -\frac{f_4}{f_1}, \dots, - \frac{f_n}{f_1}\right) \]
for $n \geq 1$.
\end{theorem}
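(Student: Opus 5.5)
The statement is the classical Lagrange inversion theorem written in Bell-polynomial form. My plan is to derive it from the coefficient-extraction form of Lagrange inversion and then identify the resulting expansion with Bell polynomials.

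\textbf{Step 1: reduce to $f_1=1$.} If $G(F(t))=t$, then $F$ and $G$ are mutually inverse, so also $F(G(u))=u$. Write $F=f_1\tilde F$ with $\tilde f_1=1$. The inverse of $\tilde F$ is $\tilde G(s)=G(f_1 s)$, whose coefficients are $\tilde g_n=f_1^n g_n$. The arguments $-f_j/f_1$ in the statement are exactly the coefficients $-\tilde f_j$ of $\tilde F$. Hence it suffices to prove
\[
g_n=\sum_{k}\Bell_{n+k-1,k}(0,-f_2,-f_3,\dots)\qquad\text{when } f_1=1.
\]

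\textbf{Step 2: invoke Lagrange inversion.} I will use the coefficient-extraction form
\[
[u^n]\,G(u)=\tfrac1n\,[t^{n-1}]\bigl(t/F(t)\bigr)^n .
\]
This can be cited, e.g.\ from \cite[Theorem~5.4.2]{stanley-ec2}, or proved via formal residues: use $\operatorname{res}(F'F^{-n-1})=0$ and the change of variables $u=F(t)$ in $\operatorname{res}(G\,u^{-n-1})$. Justifying this residue identity cleanly is the only nontrivial input, and I expect it to be the main obstacle if a self-contained proof is required.

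Next set $\Phi(t)=\sum_{j\ge2}f_j t^j/j!$, so that $t/F(t)=(1+\Phi(t)/t)^{-1}$. Expanding,
\[
(t/F)^n=\sum_{k\ge0}(-1)^k\binom{n+k-1}{k}\,t^{-k}\,\Phi(t)^k .
\]
By the definition of Bell polynomials (equivalently, Theorem~\ref{thm:compositional_formula} applied with $G=e^{yt}$ and comparing powers of $y$), we have
\[
\Phi(t)^k=k!\sum_m \Bell_{m,k}(0,f_2,f_3,\dots)\,\frac{t^m}{m!}.
\]

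\textbf{Step 3: extract coefficients.} We need $[t^{n-1}]$, which forces $m=n+k-1$. Multiplying by $n!/n=(n-1)!$, the binomial factor and the factorials cancel exactly:
\[
(n-1)!\,\frac{(n+k-1)!}{k!\,(n-1)!}\,\frac{k!}{(n+k-1)!}=1 .
\]
This gives
\[
g_n=\sum_k(-1)^k\Bell_{n+k-1,k}(0,f_2,\dots).
\]
Since $\Bell_{n+k-1,k}$ is homogeneous of degree $k$, the factor $(-1)^k$ can be absorbed into the arguments, giving $\Bell_{n+k-1,k}(0,-f_2,\dots)$.

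\textbf{Step 4: range of $k$.} Because the first argument is $0$, only partitions with all blocks of size at least $2$ contribute. This requires $2k\le n+k-1$, i.e.\ $k\le n-1$. The $k=0$ term is $\Bell_{n-1,0}$, which vanishes for $n\ge2$. For $n=1$ the $k=0$ term gives $g_1=1/f_1$, and I will note that the stated sum must be read with this convention in the case $n=1$.
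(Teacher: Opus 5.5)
Your proof is correct and is essentially the route the paper indicates. The paper gives no argument of its own: it cites Comtet and remarks that the statement is formally equivalent to Stanley's Theorem~5.4.2 with $k=1$, and your Steps~1--4 (normalizing to $f_1=1$, expanding $(t/F)^n$, identifying $\Phi^k/k!$ with $\Bell_{m,k}$, using degree-$k$ homogeneity) carry out exactly that equivalence.

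Your caveat about $n=1$ is also right. As printed, $\sum_{k=1}^{n-1}$ is empty when $n=1$, whereas $g_1=1/f_1$; this value is the $k=0$ term $\Bell_{0,0}=1$, and Comtet states the case $g_1$ separately.
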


The above statement can be found in \cite[{Theorem~E, p.~151, eq.~8g}]{comtet}. It is formally equivalent to the formulation in \cite[Theorem~5.4.2]{stanley-ec2} (specializing that statement to $k=1$). 

\begin{theorem}\label{thm:reciprocal_series}
Let $F(t)=\sum_{n \geq 0} f_n \frac{t^n}{n!}$ be a series in $R[[t]]$ such that $f_0 \neq 0$. Then its multiplicative inverse $1/F(t) = \sum_{n\geq 0}g_n t^n$ exists uniquely and its coefficients are given by 
\[
g_n = \frac{1}{f_0^{n+1}}\sum_{k\geq 0}(-1)^k\, k ! \, \Bell_{n,k} \left(\frac{f_1}{f_0}, \frac{f_2}{f_0}, \dots , \frac{f_{n-k+1}}{f_0}  \right)
.\]
\end{theorem}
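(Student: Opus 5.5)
The plan is to reduce the statement to the compositional formula (Theorem~\ref{thm:compositional_formula}) by writing $1/F$ as the composition of the geometric series with a series that has no constant term. Existence and uniqueness of the inverse are standard: since $f_0$ is invertible (as the statement implicitly assumes), the equation $F\cdot G=1$ determines the coefficients of $G$ one at a time by triangular recursion. So the content of the statement is the explicit formula. As in the rest of the toolkit, I read the coefficients of $1/F$ exponentially, i.e.\ $1/F(t)=\sum_{n\ge0} g_n \frac{t^n}{n!}$; this is the normalization in which the factor $k!$ appears.

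\textbf{Step 1 (normalize).} Write $F(t)=f_0\bigl(1+U(t)\bigr)$, where
\[
U(t)=\sum_{n\ge1}\frac{f_n}{f_0}\frac{t^n}{n!}
\]
satisfies $U(0)=0$. Then $1/F = f_0^{-1}\, G(U(t))$ with $G(y)=\frac{1}{1+y}$.

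\textbf{Step 2 (expand $G$ exponentially).} We have
\[
G(y)=\sum_{k\ge0}(-1)^k y^k=\sum_{k\ge0}(-1)^k k!\,\frac{y^k}{k!},
\]
so the exponential coefficients of $G$ are $(-1)^k k!$.

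\textbf{Step 3 (compose).} Theorem~\ref{thm:compositional_formula} then gives the $n$-th exponential coefficient of $G(U)$ as
\[
\sum_{k=0}^n(-1)^k k!\,\Bell_{n,k}\!\left(\frac{f_1}{f_0},\dots,\frac{f_{n-k+1}}{f_0}\right).
\]
Multiplying by $f_0^{-1}$ yields $g_n$.

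\textbf{Main obstacle (normalization of the prefactor).} I expect the only delicate point to be the power of $f_0$ in front. The computation above produces the prefactor $f_0^{-1}$ when the arguments are $f_i/f_0$. Equivalently, using that $\Bell_{n,k}$ is homogeneous of degree $k$, it produces $f_0^{-k-1}$ inside the $k$-th summand when the arguments are the unnormalized $f_i$. I would check this against the first cases: $g_0=1/f_0$ and $g_1=-f_1/f_0^2$. With that check I would state the formula in whichever homogeneous form matches the intended convention. The version with prefactor $f_0^{-(n+1)}$ and arguments $f_i/f_0$ agrees with the derivation when $f_0=1$, which is the case used later in the paper, since $A(0,x)=B(0,x)=1$. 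In every convention the core argument is the same single application of Faà di Bruno to $1/(1+y)$.
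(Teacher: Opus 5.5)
Your argument is correct and is essentially the paper's proof: both write $1/F$ as a geometric series composed with the constant-free series $F-f_0$ and apply Theorem~\ref{thm:compositional_formula}, reading the $g_n$ as exponential coefficients; the paper intends to expand $1/(f_0+y)$ directly (its write-up has sign slips there), whereas you first factor out $f_0$. Your prefactor $f_0^{-1}$ is the correct one and the stated $f_0^{-(n+1)}$ is a typo, so commit to your version rather than hedging: the later application in Theorem~\ref{thm:iterative-B} is to $\overline{\chi}^B$, whose constant term is $\overline{\chi}^B_0=-1$ rather than $1$, and there the stated prefactor would introduce a spurious sign $(-1)^n$ that the final formula does not have.
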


\begin{proof}
The above follows from the compositional formula from taking \[G(t) = \frac{1}{-f_0+t} = \frac{-1/f_0}{1 - t/f_0} = \frac{-1}{f_0} \sum_{k\geq 0} \frac{1}{f_0^k} t^k,\] and the function $\widehat{F}(t) = F(t) - f_0$, which by definition satisfies $\widehat{F}(0) = 0$. We have:
\[ G(\widehat{F}(t)) = \frac{1}{f_0-\widehat{F}(t)} = \frac{1}{f_0 - (F(t)-f_0)} = \frac{1}{F(t)}.\]
After substituting the coefficients of the generating functions of $G$ and $\widehat{F}$ we arrive to the claimed identity.
\end{proof}

We now proceed to extend these results to the framework of type B exponential generating functions. The following is the type B analogue of the compositional formula appearing in Stanley's \cite[Theorem~5.1.4]{stanley-ec2}. 

\begin{theorem}
\label{thm:B_compositional_formula}
    Let $f,g,k \colon \mathbb{Z}_{\geq 0} \to R$ be three functions such that $f(0)=0$.
    Let $G,K$ be the $B$-exponential generating functions of $g$ and $k$, respectively, and $F$ the exponential generating function of $f$.
    Define
    \[h(n) \coloneqq \sum_{\pi \vdash \langle n \rangle } k \left( \frac{\lvert Z \rvert}{2} \right) g (\ell(\pi)) \prod_{i=1}^{\ell(\pi)} f(\lvert B_i \rvert). \]
    Then the $B$-exponential generating function of $h$ is given by
    \[ H(t)= K(t) G(F(t)). \]
\end{theorem}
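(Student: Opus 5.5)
The plan is to compute the coefficient of $t^n/(2^n n!)$ in $K(t)G(F(t))$ directly and match it with $h(n)$ by a bijective count of signed partitions. First I will parametrize signed partitions of $\langle n\rangle$. Choosing $\pi$ amounts to three choices:
\begin{enumerate}
\item a subset $S\subseteq[n]$ of size $j$, with zero block $Z=S\cup\overline S$, so that $|Z|/2=j$;
\item a set partition $\sigma=\{\sigma_1,\dots,\sigma_m\}$ of $[n]\setminus S$, where $m=\ell(\pi)$;
\item for each part $\sigma_i$, one of the $2^{|\sigma_i|-1}$ ways to split it into a pair of conjugate blocks $\{B_i,\overline B_i\}$ with $|B_i|=|\sigma_i|$.
\end{enumerate}
This is the same description used in the proof of Theorem~\ref{thm:B_n_as_chow_polynomial}, with $2^{n-j-\ell(\sigma)}$ sign choices in total. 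Since the summand depends only on $j$, $m$ and the block sizes, we get
\[
h(n)=\sum_{j=0}^n\binom{n}{j}k(j)\sum_{\sigma\vdash[n-j]} g(\ell(\sigma))\prod_{i}2^{|\sigma_i|-1}f(|\sigma_i|).
\]

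Next I will identify the inner sum as a type B generating function. Set $\tilde f(m)=2^{m-1}f(m)$. The inner sum is then $\sum_{\ell} g(\ell)\Bell_{n-j,\ell}(\tilde f(1),\tilde f(2),\dots)$, which by the compositional formula (Theorem~\ref{thm:compositional_formula}) is the coefficient of $u^{n-j}/(n-j)!$ in $G^A(\tilde F^A(u))$. Here $G^A(u)=\sum g(\ell)u^\ell/\ell!$ and $\tilde F^A(u)=\sum_m 2^{m-1}f(m)u^m/m!=\tfrac12 F^A(2u)$. The remaining step is to express this through the paper's normalization of $F$.

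This normalization is the step I expect to be the main obstacle. For the identity $H=K\,G(F)$ to hold with $G,K$ the $B$-generating functions, we need $G^B(F(t))=G^A(F(t)/2)$ to equal $G^A(\tilde F^A(t/2))=G^A\bigl(\tfrac12 F^A(t)\bigr)$. This forces $F=F^A$, the ordinary exponential generating function, which is exactly what the statement says ("$F$ the exponential generating function of $f$"). So the plan is to check that $G^B(F^A(t)) = G^A(F^A(t)/2)$ is the type B function of $n\mapsto\sum_\ell g(\ell)\Bell_{n,\ell}(\tilde f)$. Evaluating it at $t$ in place of $2u$ recovers the type A series $G^A(\tilde F^A(u))$, which confirms the required identity.

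Finally, the binomial convolution gives the type A product. Writing $c(m)$ for the inner sum, we have $h(n)=\sum_j\binom nj k(j)c(n-j)$, so $H^A(u)=K^A(u)C^A(u)$. Substituting $u=t/2$, the relation $F^B(2t)=F^A(t)$ turns each type A series into the corresponding type B series. This yields $H(t)=K(t)\,C^B(t)=K(t)\,G(F(t))$. The only care needed is tracking the factors of $2$ consistently, so I would verify the result on $n=1$ as a check: there $h(1)=k(1)g(0)+k(0)g(1)f(1)$, and the coefficient of $t/2$ agrees.
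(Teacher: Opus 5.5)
Your proposal is correct and follows essentially the same route as the paper: the same parametrization of signed partitions (zero-block subset, set partition of the remaining elements, $2^{n-j-\ell(\sigma)}$ sign choices), a binomial convolution that splits off $K$, and the classical compositional formula. The only difference is where the powers of $2$ go. The paper combines $2^{u-\ell(\sigma)}$ with $t^u/(2^u u!)$, so the outer coefficients become $g(\ell)/2^{\ell}$ and $G(F(t))$ appears directly. You instead attach $2^{|\sigma_i|-1}$ to each $f$ and rescale via $t=2u$ at the end.
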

\begin{proof}
It is easy to see that a signed partition $\pi$ is determined by the choice of a subset of $[n]$ of cardinality $z \coloneqq \frac{\lvert Z \rvert}{2}$, a partition of $[n-z]$, and $2^{n-z-\ell(\pi)}$ signs.
Therefore,
\[h(n)=\sum_{z=0}^{n}\binom{n}{z} k(z) \sum_{\sigma \vdash [n-z]} 2^{n-z-\ell(\sigma)} g(\ell(\sigma)) \prod_{i=1}^{\ell(\sigma)} f(\lvert B_i \rvert)\]
and the generating function can be factored as
\[ H(t)= \left(\sum_{z\geq 0} k(z) \frac{t^z}{2^z z!}\right)\left( \sum_{u\geq 0} \sum_{\sigma \vdash [u]} \frac{g(\ell(\sigma))}{2^{\ell(\sigma)}} \prod_{i=1}^{\ell(\sigma)} f(\lvert B_i \rvert ) \frac{t^u}{u!} \right). \]
Using the classical compositional formula (see e.g.\ \cite[Theorem 5.1.4]{stanley-ec2}) we obtain the desired result.
\end{proof}

This can be immediately recast in terms of Bell polynomials as follows, thus giving a type B analogue of Theorem~\ref{thm:compositional_formula} 
. This can, alternatively, be derived directly from that theorem.

\begin{corollary}\label{coro:compositional_formula_type_B}
Let $F(t) = \sum_{n\geq 1} f_n \frac{t^n}{n!}$ and $G(t) = \sum_{n\geq 0}g_n \frac{t^n}{2^n n!}$ be two series in $R[[t]]$.
Then the composition $G(F(t)) =: H(t) = \sum_{n\geq 0} h_n \frac{t^n}{2^n n!}$ as a type B generating function has coefficients
\[
h_n = \sum_{k=1}^n 2^{n-k}g_k \Bell_{n,k}(f_1,f_2,\ldots).
\]
\end{corollary}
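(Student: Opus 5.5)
The plan is to reduce the statement to the classical compositional formula, Theorem~\ref{thm:compositional_formula}, by rewriting $G$ as an ordinary (type A) exponential generating function. The key steps, in order:

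\textbf{Step 1: rewrite $G$ in type A form.} Set
\[
\widetilde G(s)=\sum_{k\geq 0}\widetilde g_k \frac{s^k}{k!}, \qquad \widetilde g_k \coloneqq \frac{g_k}{2^k}.
\]
This is the same formal power series as $G$, only read in type A form.

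\textbf{Step 2: apply the classical compositional formula.} Since $F(0)=0$, Theorem~\ref{thm:compositional_formula} applies to the composition $H(t)=G(F(t))=\widetilde G(F(t))$. Writing $H(t)=\sum_n \widetilde h_n \frac{t^n}{n!}$ in type A form, it gives
\[
\widetilde h_n=\sum_{k=0}^n \frac{g_k}{2^k}\Bell_{n,k}(f_1,f_2,\ldots).
\]

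\textbf{Step 3: convert back to type B coefficients.} The type A and type B coefficients of the same series are related by $h_n = 2^n \widetilde h_n$, since $\frac{t^n}{n!}=2^n\frac{t^n}{2^n n!}$. Hence
\[
h_n=\sum_{k=0}^n 2^{n-k} g_k \Bell_{n,k}(f_1,f_2,\ldots).
\]

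\textbf{Step 4: drop the $k=0$ term.} For $n\geq 1$ we have $\Bell_{n,0}=0$, since there is no partition of a nonempty set into zero blocks. So the sum may start at $k=1$, as in the statement. For $n=0$ we simply have $h_0=g_0$, which is the convention the displayed formula is implicitly meant to exclude or absorb.

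\textbf{Alternative route.} One can instead derive the result from Theorem~\ref{thm:B_compositional_formula} with $K=1$, that is, $k(0)=1$ and $k(z)=0$ for $z>0$. Then only signed partitions with empty zero block survive. Each set partition $\sigma\vdash[n]$ with $\ell(\sigma)=k$ lifts to exactly $2^{n-k}$ signed partitions, which produces the factor $2^{n-k}$ in front of $\Bell_{n,k}$.

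\textbf{Expected obstacle.} There is no real obstacle; the only care needed is bookkeeping of the powers of $2$ and the degenerate $k=0$ term.
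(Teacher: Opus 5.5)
Your proof is correct and matches the paper. The paper obtains the corollary by recasting Theorem~\ref{thm:B_compositional_formula}, which is your alternative route with $K=1$ and the count of $2^{n-k}$ signed lifts of each $\sigma\vdash[n]$ with $k$ blocks. It also notes that the result follows directly from Theorem~\ref{thm:compositional_formula}, which is your main route via $g_k\mapsto g_k/2^k$ and $h_n=2^n\widetilde h_n$. Your remark that the displayed formula needs $n\geq 1$ is also right: $h_0=g_0$, while the sum starting at $k=1$ is empty when $n=0$.
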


\section{Type B analogues for the Poincar\'e polynomial of \texorpdfstring{$\Mbar_{0,n+1}$}{Mbar0n+1}}

\noindent Let $\HA(t,x)$ be the exponential generating function of $A_n(x)=P_{\Mbar_{0,n+1}}(x)$:
\begin{align*}
    \HA(t,x) &=1+ t + \sum_{n\geq 2} A_n \frac{t^n}{ n!} \\ 
    & = 1 + t + \frac{t^2}{2} + (x+1) \frac{t^3}{3!} + (x^2+5x+1)\frac{t^4}{4!} + (x^3+16x^2+16x+1) \frac{t^5}{5!} \\
    &\qquad+ (x^4+42 x^3 + 127 x^2 + 42 x +1) \frac{t^6}{6!}+ \dots
\end{align*}

There exist many implicit and explicit ways of expressing the above generating function. Notably, the following result of Manin provides both a functional equation and a differential equation whose solutions are given by $\HA(t,x)$.

\begin{theorem}[{\cite[Theorem~0.3.1]{manin}}]
The generating function $A(t,x)$ can be obtained as the only solution in $1+t+t^2\mathbb{Q}[x][[t]]$ of any of the following two equations:
    \begin{equation}\label{eq:Manin A}
        \HA^x-x^2 \HA =1-x^2+(1-x)xt.
    \end{equation}
    \begin{equation}\label{eq:Manin diff A}
        \HA = (1+x+xt-x\HA)\frac{\partial}{\partial t}\HA.
    \end{equation}
\end{theorem}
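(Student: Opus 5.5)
The plan is to derive Manin's functional equation \eqref{eq:Manin A} from the type A counterpart of Theorem~\ref{thm:B_n_as_chow_polynomial}, and then obtain \eqref{eq:Manin diff A} from it by differentiation.

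\textbf{Step 1: the type A identity.} Specializing \cite[Theorem~3.6]{eur-ferroni-matherne-pagaria-vecchi} to the braid arrangement $\mathcal A_{n-1}$ with its minimal building set gives an identity over $\Pi^A_n$. Every interval there has the form $[\hat 0,\sigma]\cong\prod_i \Pi^A_{|B_i|}$ and $[\sigma,\hat 1]\cong \Pi^A_{\ell(\sigma)}$. The identity reads
\[
\sum_{\sigma\vdash[n]} \prod_{i=1}^{\ell(\sigma)} A_{|B_i|}(x)\,\overline{\chi}^A_{\ell(\sigma)}(x) = -1 \qquad (n\ge 1).
\]
Here $\overline{\chi}^A_k(x)=(x-2)(x-3)\cdots(x-k+1)$ is the reduced characteristic polynomial of $\Pi^A_k$, with the low-index conventions chosen as in the type B case.

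\textbf{Step 2: generating functions.} I apply the compositional formula (Theorem~\ref{thm:compositional_formula}) with $F=A-1$ and $G(u)=\sum_k \overline{\chi}^A_k\,u^k/k!$. The left-hand side of Step 1 then becomes the $t^n$ coefficient of $G(A-1)$, so $G(A-1)=c_0-\sum_{n\ge1}t^n/n!$ for a suitable constant $c_0$. Next I compute $G$ in closed form. From $\chi^A_k(x)=(x-1)\cdots(x-k+1)$ one gets $\sum_{k\ge1}\chi^A_k u^k/k! = ((1+u)^x-1)/x$. Dividing by $x-1$ and correcting the $k=0,1$ terms gives
\[
G(u)=\frac{(1+u)^x-1-xu}{x(x-1)}+\text{(a low-degree polynomial in } u).
\]
Substituting $u=A-1$ and clearing denominators yields a relation of the form $A^x = \alpha A + \beta + \gamma t$ with $\alpha,\beta,\gamma\in\mathbb Q(x)$.

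I expect the main obstacle to be this bookkeeping: fixing the indexing conventions ($A_0$, $A_1$, and the values of $\overline\chi^A_0,\overline\chi^A_1$) so that the exponential in $t$ cancels and one obtains exactly $\alpha=x^2$, $\beta=1-x^2$, $\gamma=(1-x)x$. To settle the constants rather than trust the conventions blindly, I will compare coefficients of $t^0,t^1,t^2$ using $A=1+t+t^2/2+\cdots$.

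\textbf{Step 3: the differential equation.} Differentiating \eqref{eq:Manin A} in $t$ gives $xA^{x-1}A'-x^2A'=x(1-x)$, hence $A'(A^x/A-x)=1-x$. Substituting $A^x=x^2A+1-x^2+(1-x)xt$ gives
\[
\frac{A^x}{A}-x=\frac{(1-x)(1+x+xt-xA)}{A},
\]
and therefore $A=(1+x+xt-xA)A'$, which is \eqref{eq:Manin diff A}.

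\textbf{Step 4: uniqueness.} For either equation I will write a candidate solution as $1+t+\sum_{n\ge2}a_nt^n/n!$ and compare coefficients of $t^n$. In the differential equation the coefficient of $t^n$ involves $a_{n+1}$ linearly with coefficient $1+x-x=1$, plus earlier terms, so $a_{n+1}$ is determined recursively. In \eqref{eq:Manin A}, the $t^n$ coefficient of $A^x-x^2A$ equals $(x-x^2)a_n$ plus a polynomial in $a_2,\dots,a_{n-1}$. Since $x-x^2\neq0$ in $\mathbb Q(x)$, this determines $a_n$ recursively, so the solution in $1+t+t^2\mathbb{Q}[x][[t]]$ is unique. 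Because $A$ solves both equations by Steps 2 and 3, it is the unique solution of each.
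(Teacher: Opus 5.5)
Your overall strategy works, and Steps~3 and~4 are correct as written. The problem is Step~1: it asserts a false identity, and the move in Step~2 to ``a relation of the form $A^x=\alpha A+\beta+\gamma t$'' does not follow from it. The identity from \cite[Theorem~3.6]{eur-ferroni-matherne-pagaria-vecchi} has right-hand side $-1$ only in rank zero and $0$ in positive rank. In your indexing it reads
\[
\sum_{\sigma\vdash[n]}\prod_{i=1}^{\ell(\sigma)}A_{|B_i|}(x)\,\overline{\chi}^A_{\ell(\sigma)}(x)=\begin{cases}-1 & n=1,\\ 0 & n\geq 2.\end{cases}
\]
No choice of conventions rescues your version:
\begin{enumerate}
\item $A_1=1$ is forced by $A\in 1+t+t^2\mathbb{Q}[x][[t]]$.
\item The case $n=1$ then forces $\overline{\chi}^A_1=-1$.
\item For $n=2$ the sum is then $A_1^2\overline{\chi}^A_2+A_2\overline{\chi}^A_1=1-1=0$, not $-1$; for $n=3$ it is $(x-2)+3-(x+1)=0$.
\end{enumerate}
The ``$=-1$'' displayed in Theorem~\ref{thm:B_n_as_chow_polynomial}, which you seem to have transcribed, has the same misprint: for $n=1$ that sum is $B_0A_1\overline{\chi}^B_1+B_1\overline{\chi}^B_0=1-1=0$. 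So the term $\sum_{n\geq1}t^n/n!$ in your $G(A-1)$ is not a bookkeeping artifact that conventions for $A_0,A_1,\overline{\chi}^A_0,\overline{\chi}^A_1$ can cancel. Matching the coefficients of $t^0,t^1,t^2$ cannot repair it either. That only fixes $\alpha,\beta,\gamma$ once the shape of the relation is known, and with your right-hand side the relation would involve $e^t$.

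With the correct right-hand side and $\overline{\chi}^A_0:=0$, Step~2 gives exactly $G(A-1)=-t$, that is, $-\overline{\chi}^A(A-1,x)=t$ with $\overline{\chi}^A(u,x)=-u+\frac{(1+u)^x-1-xu}{x(x-1)}$. This is the identity recalled in the remark after Lemma~\ref{lemma:gen_fun_char_poly_B}. Multiplying by $x(x-1)$ gives $x(x-1)(A-1)-A^x+1+x(A-1)=x(x-1)t$, which is \eqref{eq:Manin A} on the nose, with no constants left to fit. After this fix your argument is a complete proof.

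It also takes a genuinely different route from the paper. The paper gives no proof of this statement and simply imports it from Manin. Your corrected argument is the type~A analogue of the paper's own proof of Theorem~\ref{thm:HB_from_HA}: it obtains both of Manin's equations, and their uniqueness, from the Chow-polynomial identity, the compositional formula, and elementary formal manipulations.
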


Our primary goal in this section is to provide type $B$ analogues of the above two equations. To this end, let us denote by $\HB(t,x)$ the $B$-exponential generating function of $B_n(x) =  P_{\Mbar_{0,n}^B}(x)$:
\begin{align*}
    \HB(t,x) &= 1+ \sum_{n\geq 1} B_n \frac{t^n}{2^n n!} \\
    &= 1+\frac{t}{2} + (x+1) \frac{t^2}{2^2 2!} + (x^2+8x+1)\frac{t^3}{2^3 3!} + (x^3+35x^2+35x+1) \frac{t^4}{2^4 4!} \\
    &\qquad + (x^4+122 x^3 + 482 x^2 +122 x +1) \frac{t^5}{2^5 5!}+ \dots
\end{align*}

\subsection{Formulas relating A and B}

We provide three different expression for the generating function $B$ in terms of 
$A$. In order to simplify our subsequent computations, we prove the following preliminary result.

\begin{lemma}\label{lemma:gen_fun_char_poly_B}
    The $B$-exponential generating function of the reduced characteristic polynomial $\overline{\chi}_n^B$ is 
    \[
        \overline{\chi}^B (t,x) \coloneq \sum_{n\geq 0}\overline{\chi}^B_n(x) \frac{t^n}{2^n n!} = \frac{(1+t)^{\frac{x-1}{2}}-x}{x-1}.
    \]
\end{lemma}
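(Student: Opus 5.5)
The plan is a direct coefficient computation using the explicit product formula $\chi^B_n(x)=\prod_{i=0}^{n-1}(x-(2i+1))$. I will compute the $B$-exponential generating function of the full characteristic polynomials first. Then I will divide by $x-1$ and correct the constant term to account for the convention $\overline{\chi}^B_0=-1$.

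For the first step, I would use the generalized binomial theorem. Write
\[
\chi^B_n(x)=\prod_{i=0}^{n-1}\bigl(x-1-2i\bigr)=2^n\prod_{i=0}^{n-1}\Bigl(\tfrac{x-1}{2}-i\Bigr)=2^n\, n!\binom{\frac{x-1}{2}}{n},
\]
which also holds for $n=0$ with $\chi^B_0=1$ (empty product). Hence
\[
\sum_{n\ge 0}\chi^B_n(x)\frac{t^n}{2^n n!}=\sum_{n\geq 0}\binom{\frac{x-1}{2}}{n}t^n=(1+t)^{\frac{x-1}{2}},
\]
as an identity in $\mathbb{Q}[x][[t]]$. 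The $2^n n!$ normalization of type $B$ generating functions is exactly what makes this collapse.

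For the second step, for $n\ge1$ we have $\overline{\chi}^B_n=\chi^B_n/(x-1)$, which is a polynomial since the $i=0$ factor is $x-1$. So
\[
\sum_{n\ge1}\overline{\chi}^B_n\frac{t^n}{2^nn!}=\frac{(1+t)^{\frac{x-1}{2}}-1}{x-1}.
\]
Adding the constant term $\overline{\chi}^B_0=-1=\frac{1-x}{x-1}$ gives
\[
\frac{(1+t)^{\frac{x-1}{2}}-1+1-x}{x-1}=\frac{(1+t)^{\frac{x-1}{2}}-x}{x-1},
\]
which is the claimed formula.

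No step is a real obstacle. The only points needing care are the convention $\overline{\chi}^B_0=-1$ and the fact that the division by $x-1$ is legitimate coefficientwise. The latter holds because $(1+t)^{\frac{x-1}{2}}-1$ has coefficients divisible by $(x-1)$ in $\mathbb{Q}[x]$, as the factorization above shows.
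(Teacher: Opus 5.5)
Your proposal is correct and follows essentially the same route as the paper: you rewrite $\chi^B_n(x)=2^n n!\binom{\frac{x-1}{2}}{n}$, sum it via the binomial series to get $(1+t)^{\frac{x-1}{2}}$, and then divide by $x-1$ and adjust the constant term using the convention $\overline{\chi}^B_0=-1$. Your justification that the coefficientwise division by $x-1$ is legitimate is a small detail the paper leaves implicit.
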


\begin{proof}
By definition the generating function is
\begin{align*}
-1+\sum_{n \geq 1} \overline{\chi}_n^B(x) \frac{t^n}{2^n n!} 
&= -1 + \frac{\sum_{n \geq 1} \chi_n^B(x) \frac{t^n}{2^n n!}}{x-1}
= -1 + \frac{\sum_{n \geq 1} \prod_{i=0}^{n-1} \left( \frac{x-1}{2}-i \right) \frac{t^n}{n!}}{x-1} \\
&= -1 + \frac{\sum_{n \geq 1} \binom{\frac{x-1}{2}}{n} t^n}{x-1} = -1+\frac{(1+t)^{\frac{x-1}{2}}-1}{x-1},
\end{align*}
which is the claimed function.
\end{proof}

\begin{remark}
    As we explained in \cite[Proposition~4.5(i)]{eur-ferroni-matherne-pagaria-vecchi}, the generating function
    \[\overline{\chi}^A(t,x):=\sum_{n \geq 1} \overline{\chi}_n^A(x) \frac{t^n}{n!} = -t + \frac{(1+t)^x-1-xt}{x(x-1)}\]
    satisfies the property that the compositional inverse of $-\overline{\chi}^A(t,x)$ is precisely the generating function $A-1$. The reader \emph{should not} be misled to think that the compositional inverse of $-\overline{\chi}^B(t,x)$ yields so directly to the generating function $B$. The precise relationship that we obtain between $\overline{\chi}^B(t,x)$ and $B$ is stated in the next theorem.
\end{remark}

The following result makes clear the power of Theorem~\ref{thm:B_n_as_chow_polynomial}. This is a new formula that plays a crucial role to derive most of our results.

\begin{theorem}
    The generating functions $\HA$ and $\HB$ are related by:
\label{thm:HB_from_HA}
\begin{equation}
B = \frac{1-x}{\HA^{\frac{x-1}{2}}-x} = \frac{-1}{\overline{\chi}^B(A-1,x)}\label{eq:HB_from_HA}. 
\end{equation}
\end{theorem}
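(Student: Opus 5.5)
The identity of Theorem~\ref{thm:B_n_as_chow_polynomial} is exactly of the shape handled by the type B compositional formula, Theorem~\ref{thm:B_compositional_formula}. I would apply that theorem with
\[
k(z)=B_z(x), \qquad g(\ell)=\overline{\chi}^B_\ell(x), \qquad f(m)=A_m(x)\ \ (m\geq 1),\quad f(0)=0.
\]
First I check that the left-hand side of Theorem~\ref{thm:B_n_as_chow_polynomial} equals $h(n)$ for every $n\geq 1$. The proof of Theorem~\ref{thm:B_compositional_formula} decomposes $h(n)$ into the choice of a zero block of size $2j$, a set partition $\sigma\vdash[n-j]$, and $2^{n-j-\ell(\sigma)}$ signs, with the zero-block part weighted by $\binom{n}{j}$. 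This is precisely the sum appearing in Theorem~\ref{thm:B_n_as_chow_polynomial}.

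Next come the generating functions. The $B$-generating function of $k$ is $B$ itself, with $B_0=1$. The $B$-generating function of $g$ is $\overline{\chi}^B(t,x)$ from Lemma~\ref{lemma:gen_fun_char_poly_B}, using the convention $\overline{\chi}^B_0=-1$. The ordinary exponential generating function of $f$ is
\[
F=\sum_{m\geq1}A_m\frac{t^m}{m!}=A-1,
\]
using $A_1=1$ and $A_2=1$ as in the expansion of $A$. Theorem~\ref{thm:B_compositional_formula} then gives that the $B$-generating function of $h$ is
\[
H(t)=B(t,x)\,\overline{\chi}^B(A-1,x).
\]

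Now I determine $H$ coefficient by coefficient. Theorem~\ref{thm:B_n_as_chow_polynomial} gives $h(n)=-1$ for all $n\geq1$. For $n=0$ the only signed partition of the empty set is the empty one, so $h(0)=k(0)g(0)=1\cdot(-1)=-1$. This $n=0$ case is the point I expect to need care: one must confirm that $\emptyset$ counts as a signed partition with $z=0$ and $\ell=0$, consistent with the product formula in the proof of Theorem~\ref{thm:B_compositional_formula}. It is also the reason for the convention $\overline{\chi}^B_0=-1$. With this,
\[
H=-\sum_{n\ge0}\frac{t^n}{2^n n!}=-e^{t/2}.
\]

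At this point there is an apparent problem: the claimed identity says $B\,\overline{\chi}^B(A-1,x)=-1$, not $-e^{t/2}$. I would resolve it by rechecking the precise constant in \cite[Theorem~3.6]{eur-ferroni-matherne-pagaria-vecchi}. If that theorem gives $-1$ only for the top-rank term, i.e.\ its right-hand side is $-\delta_{n,0}$-type, then the coefficients for $n\geq1$ vanish and $H=-1$. The stated main theorem forces exactly this reading, so I would take the right-hand side of Theorem~\ref{thm:B_n_as_chow_polynomial} to be $0$ for $n\geq1$ and $-1$ for $n=0$.

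With $H=-1$, the identity follows:
\[
B=\frac{-1}{\overline{\chi}^B(A-1,x)}.
\]
Finally I substitute Lemma~\ref{lemma:gen_fun_char_poly_B}:
\[
\overline{\chi}^B(A-1,x)=\frac{A^{\frac{x-1}{2}}-x}{x-1},
\]
which gives
\[
B=\frac{1-x}{A^{\frac{x-1}{2}}-x}.
\]
The composition is legitimate because $A-1$ has zero constant term, and $A^{\frac{x-1}{2}}-x$ has constant term $1-x$. That constant term is invertible in $\mathbb{Q}(x)[[t]]$, and the resulting series in fact lies in $\mathbb{Q}[x][[t]]$ because the two sides agree coefficientwise.
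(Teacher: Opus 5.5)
Your argument is the paper's own proof. You feed Theorem~\ref{thm:B_n_as_chow_polynomial} into Theorem~\ref{thm:B_compositional_formula} with $k=B_z$, $g=\overline{\chi}^B_\ell$, $f=A_m$ (so $F=A-1$), obtain $H=B\cdot\overline{\chi}^B(A-1,x)$, and then substitute Lemma~\ref{lemma:gen_fun_char_poly_B}.

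Your worry about the constant is justified, and the paper's one-line proof passes over it. Taken literally, the ``$=-1$ for every $n\geq 1$'' in Theorem~\ref{thm:B_n_as_chow_polynomial} would give $H=-e^{t/2}$. That literal statement is in fact false:
\begin{itemize}
\item for $n=1$ the left-hand side is $B_0A_1\overline{\chi}^B_1+B_1\overline{\chi}^B_0=1-1=0$;
\item for $n=2$ it is $(x-1)+2-(x+1)=0$.
\end{itemize}
So the right-hand side must be $-\delta_{n,0}$, exactly as you concluded, and this is what the paper's proof tacitly uses.

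What you should change is the justification. Saying that ``the stated main theorem forces this reading'' assumes the very identity you are trying to prove. Ground it instead in the identity being specialized, namely the Chow-polynomial recursion $\H_{[\hat 0,\hat 1]}=\sum_{\pi<\hat 1}\H_{[\hat 0,\pi]}\,\overline{\chi}_{[\pi,\hat 1]}$ for intervals of positive rank. Move the $\pi=\hat 1$ term, which is $B_n\overline{\chi}^B_0=-B_n$, to the other side; the recursion then says precisely that the full sum over signed partitions vanishes for $n\geq 1$. Use the small-case computations only as a consistency check. With that in place, $H=-1$ and the rest of your proof goes through as written.
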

\begin{proof}
    By combining Theorem \ref{thm:B_n_as_chow_polynomial} with \Cref{thm:B_compositional_formula} and Lemma~\ref{lemma:gen_fun_char_poly_B} one is led to the identity 
    \begin{equation*}
        B \cdot \frac{A^{\frac{x-1}{2}}-x}{x-1}=-1,
    \end{equation*}
    which is equivalent to the statement.
\end{proof}

\begin{remark}
    One can prove \Cref{thm:HB_from_HA} by leveraging the machinery of Yuzvinsky in \cite{yuzvinsky}. We summarize in this remark the core of an alternative proof along those lines.
    Consider $\alpha=\alpha(t,x)$ and $\beta=\beta(t,x)$ as defined in \cite[Theorem~5.1]{yuzvinsky}.
    The following is equation (5.1) in Yuzvinsky's paper:
    \begin{equation}
    \alpha^{2x} -x^2 \alpha^2 = 1-x^2 -(x^2-x)t.
    \end{equation}
    This, via Manin's equation~\eqref{eq:Manin A}, is just saying that $\alpha^2 = A$ (something that Yuzvinsky notes in \cite[Remark~5.3]{yuzvinsky}).
    By differentiating this equation with respect to $t$, one obtains
    \[ \left(2 x \cdot \alpha^{2x-1} - 2x^2\alpha \right) \frac{\partial\alpha}{\partial t} = x-x^2. \]
    \[ \frac{\partial \alpha}{\partial t} = \frac{1-x}{2(\alpha^{2x-1}-x\alpha)}\]
    and \cite[eq.~(5.2)]{yuzvinsky} becomes 
    \[
    \frac{\partial \beta}{\partial t} = \frac{x}{x-1}(\alpha^{x-1}-1)\frac{\partial \alpha}{\partial t}.
    \]
    Integrating the above equation and adding the correct constant term, we obtain:
    \[
    \beta = \frac{\alpha^x-x\alpha}{x-1}+1
    \]
    and by \cite[eq.\ (5.3)]{yuzvinsky} we have 
    \[\HB = \frac{1-x}{\alpha^{x-1}-x} 
    ,\]
    which, after using again the condition $\alpha^2 = A$ yields the conclusion of the preceding theorem.
\end{remark}

By leveraging some of Manin's identities for the generating function $A$, the preceding result allows us to derive further equations relating $A$ and $B$. The following result provides two such alternatives. 

\begin{theorem}\label{thm:HB_from_HA2}
The generating functions $\HA$ and $\HB$ are related by:
\begin{align}
\HB 
&= \frac{\HA^{\frac{x+1}{2}}+x\HA}{1+x+xt}
\label{eq:second_HB_from_HA} \\
&= \frac{x \HA + \sqrt{x^2A^2 + \HA (1-x)(1+x+xt)}}{1+x+xt}.
\end{align}
\end{theorem}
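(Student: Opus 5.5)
Both identities will be derived from Theorem~\ref{thm:HB_from_HA}, i.e.\ $B = (1-x)/(A^{\frac{x-1}{2}}-x)$, together with Manin's functional equation~\eqref{eq:Manin A}, $A^x = x^2A + 1-x^2+(1-x)xt$. Write $u \coloneq A^{\frac{x-1}{2}}$ and $c \coloneq 1+x+xt$. Then $u^2 A = A^x$, so Manin's equation reads
\[
u^2A = x^2A + (1-x)c,
\]
since $1-x^2+(1-x)xt = (1-x)(1+x+xt)$. All manipulations take place in $\mathbb{Q}(x)[[t]]$ (or $\mathbb{Q}[x][[t]]$ after clearing denominators). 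Powers $A^{s}$ with $s$ depending on $x$ are well defined because $A \in 1+t\,\mathbb{Q}[x][[t]]$, and $A^{s}A^{s'} = A^{s+s'}$ holds formally.

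For \eqref{eq:second_HB_from_HA}, I will check that $(1-x)c = (u-x)(uA + xA)$, which is the cross-multiplied form of $(1-x)/(u-x) = (A^{\frac{x+1}{2}}+xA)/c$, using $uA = A^{\frac{x+1}{2}}$. Expanding gives $(u-x)(u+x)A = (u^2-x^2)A$, and this equals $(1-x)c$ precisely by the rewritten Manin equation. The only point needing care is that $u-x$ and $c$ are invertible. Their constant terms in $t$ are $1-x$ and $1+x$, which are units in $\mathbb{Q}(x)$, so the division is legitimate.

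For the square-root form, I will eliminate $u$. From $B(u-x) = 1-x$ we get $u = x + (1-x)/B$. From the first form, $cB = uA + xA$. Substituting gives
\[
cB = 2xA + (1-x)A/B,
\]
that is, the quadratic
\[
cB^2 - 2xAB - (1-x)A = 0 .
\]
This quadratic is the same as the final expression $A = cB^2/(1-x+2xB)$ stated in Theorem~\ref{thm:main1_func_eq_A_B}. Solving it gives $B = \bigl(xA \pm \sqrt{x^2A^2 + (1-x)Ac}\bigr)/c$.

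The main obstacle is the sign choice together with making sense of the formal square root. At $t=0$ the radicand equals $x^2 + (1-x)(1+x) = 1$, so it lies in $1+t\,\mathbb{Q}[x][[t]]$ and has a unique square root with constant term $1$. Comparing constant terms then fixes the sign: with the plus sign, $B(0) = (x+1)/(1+x) = 1$, which matches $B(0,x)=1$, whereas the minus sign gives $(x-1)/(1+x)\neq 1$. A formal power series solution with prescribed constant term is unique here, because the two roots differ by $2\sqrt{\cdot}/c$, which is a unit. So the plus branch is $B$.
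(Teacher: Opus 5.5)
Your proof is correct and follows the paper's argument. The first identity is the same conjugate trick: your cross-multiplication $(u-x)(u+x)A=(u^2-x^2)A=(1-x)c$ is exactly the paper's multiplication by $A^{\frac{x+1}{2}}+xA$ followed by Manin's equation. You then reach the same quadratic $cB^2-2xAB-(1-x)A=0$; the only difference is that you eliminate $A^{\frac{x-1}{2}}$ via $u=x+(1-x)/B$, whereas the paper squares the first identity and uses Manin's equation a second time. Your treatment of the formal square root and your choice of sign via the constant terms at $t=0$ fill in a detail the paper leaves implicit.
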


\begin{proof} 
    
    We start from \eqref{eq:HB_from_HA} and write
    \begin{align*}
        \HB &= \frac{1-x}{\HA^{\frac{x-1}{2}}-x} = \frac{1-x}{\HA^{\frac{x-1}{2}}-x} \cdot \frac{\HA^{\frac{x+1}{2}}+\HA x}{\HA^{\frac{x+1}{2}}+\HA x} = \frac{(1-x)(\HA^{\frac{x+1}{2}}+\HA x)}{\HA^{x}-\HA x^2}\\ 
        &= \frac{(1-x)(\HA^{\frac{x+1}{2}}+\HA x)}{1-x^2+(1-x)xt}
    \end{align*}
    where we used 
    \eqref{eq:Manin A} to replace $\HA^x-x^2\HA$ for the polynomial $1-x^2+(1-x)xt$. After simplifying the factor $1-x$ in the numerator and the denominator, we are led to the second formulation. 
    
    To prove the second formulation, we start from squaring \eqref{eq:second_HB_from_HA}: 
    \begin{align*}
        (1+x+xt)^2\,B^2 &= A^{x+1} + 2xA \cdot A^{\frac{x+1}{2}} + x^2A^2\\
        &= A\cdot(1-x^2+(1-x)xt+x^2A) + 2xA\cdot A^{\frac{x+1}{2}} + x^2A^2\\
        &= A(1-x^2+(1-x)xt) + 2xA(A^{\frac{x+1}{2}} + xA)\\
        &= A(1-x^2+(1-x)xt) + 2xAB(1+x+xt),
    \end{align*}
    where in the last step we employed again \eqref{eq:second_HB_from_HA}. After simplifying the common factor $1+x+xt$, we are led to the quadratic equation in $B$,
    \begin{equation}
    \label{eq:quadratic_relation_HB_HA}
    (1+x+xt)B^2 -2xAB + (x-1) A =0,
    \end{equation}
    whose solution is given precisely by the third claimed formulation.
\end{proof}

\begin{remark}
The formula in \eqref{eq:second_HB_from_HA} is especially useful to recursively compute the total dimension of the cohomology of $\Mbar_{0,n}^B$ in terms of the total dimension of the cohomology of $\Mbar_{0,n}$. Indeed, by setting $x=1$, one obtains:
\[ \HB(t,1)= \frac{\HA(t,1)}{1+\frac{t}{2}}. \]
This can be recast into the following alternative formulation:
\begin{align*} 
\dim \H^{\bullet}(\Mbar_{0,n}) &= n![t^n] A(t,1) \\
&= n![t^n](B(t,1)(1+\tfrac{t}{2})) \\
&= n!\left(\frac{1}{2^n n!} \dim \H^{\bullet}(\Mbar_{0,n}^B) + \frac{1}{2^n (n-1)!}\dim \H^{\bullet}(\Mbar_{0,n-1}^B)\right) \\
&= \frac{1}{2^n}\left(\dim\H^{\bullet}(\Mbar_{0,n}^B) + n\dim \H^{\bullet}(\Mbar_{0,n-1}^B)\right),
\end{align*}
which leads to the recursion
\[ \dim \H^{\bullet}(\Mbar_{0,n}^B) = 2^n \dim \H^{\bullet}(\Mbar_{0,n})  - n\dim \H^{\bullet}(\Mbar_{0,n-1}^B).\]

With this recursion at hand, together with the values of $\dim \H^{\bullet}(\Mbar_{0,n})$ tabulated in the OEIS \cite[Sequence A074059]{oeis} we may compute the first few values of $\dim \H^{\bullet}(\Mbar_{0,n}^B)$:

\[ \dim \H^{\bullet}(\Mbar_{0,n}^B) = \begin{cases} 1 & n=1,\\
2 & n=2,\\
10 & n=3,\\
72 & n=4,\\
728 & n=5,\\
9264 & n=6,\\
143792 & n=7.
\end{cases}\]
\end{remark}

\subsection{A functional equation for B}

Since Manin's result gives a functional equation for $A$ and we have several formulas that relate $A$ and $B$, we can put the ingredients together to deduce a functional equation for $B$, which is the desired type B analogue of \eqref{eq:Manin A}.

\begin{theorem}\label{thm:functional-B}
    The generating function $\HB$ satisfies the functional equation
    \begin{equation}
    \label{eq:compositional_for_B}
    \left( \frac{B^2(1+x+xt)}{1-x+2xB} \right)^{\frac{x-1}{2}} = \frac{1-x}{\HB} +x .
    \end{equation}
\end{theorem}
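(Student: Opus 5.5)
The plan is to eliminate $A$ between the two relations already established in Theorems~\ref{thm:HB_from_HA} and \ref{thm:HB_from_HA2}. Specifically, I will combine the first expression in \eqref{eq:HB_from_HA}, namely $B = \frac{1-x}{A^{\frac{x-1}{2}}-x}$, with the quadratic relation \eqref{eq:quadratic_relation_HB_HA}, namely $(1+x+xt)B^2 - 2xAB + (x-1)A = 0$.

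The first step is to solve the quadratic relation for $A$. Since it is linear in $A$, this gives
\[
A\,(2xB - (x-1)) = (1+x+xt)B^2 ,
\qquad\text{i.e.}\qquad
A = \frac{B^2(1+x+xt)}{1-x+2xB},
\]
which is the form announced in Theorem~\ref{thm:main1_func_eq_A_B}. One must check that the denominator $1-x+2xB$ is invertible in $\mathbb{Q}[x][[t]]$. Its constant term in $t$ is $1-x+2x = 1+x$, and I need this to be a unit. To handle this, I will work in $\mathbb{Q}(x)[[t]]$, or in $\mathbb{Q}[[x,t]]$, where $1+x$ is invertible. Alternatively, I can note that the identity can be verified after clearing denominators.

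The second step is to rearrange the first relation as
\[
A^{\frac{x-1}{2}} = \frac{1-x}{B} + x,
\]
and then substitute the expression for $A$ from the first step. This yields exactly \eqref{eq:compositional_for_B}.

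The main subtlety is making sense of the non-integer power $(\cdot)^{\frac{x-1}{2}}$ and justifying the substitution. Both $A$ and $\frac{B^2(1+x+xt)}{1-x+2xB}$ have constant term $1$ in $t$. Hence $C^{\frac{x-1}{2}} := \exp\bigl(\tfrac{x-1}{2}\log C\bigr)$ is a well-defined formal power series for any such $C$. Since the two series are equal as power series, their powers agree, and no branch issues arise. I would also remark that $B$ is the unique solution of \eqref{eq:compositional_for_B} with constant term $1$, though this is not part of the claim.
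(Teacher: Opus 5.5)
Your proposal is correct and is essentially the paper's proof: solve the quadratic relation \eqref{eq:quadratic_relation_HB_HA}, which is linear in $A$, to get $A = \frac{(1+x+xt)B^2}{1-x+2xB}$, then substitute this into an $A$--$B$ relation. The paper cites \eqref{eq:second_HB_from_HA} for the substitution, whereas you rearrange \eqref{eq:HB_from_HA} as $A^{\frac{x-1}{2}} = \frac{1-x}{B}+x$, which gives the identity immediately; your remarks on defining the power via $\exp$ and $\log$ and on inverting $1-x+2xB$ in $\mathbb{Q}(x)[[t]]$ are correct, and the paper leaves these points implicit.
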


\begin{proof}
    From \eqref{eq:quadratic_relation_HB_HA} we can write
    \[A= \frac{(1+x+xt)B^2}{1-x+2xB},\]
    which, when substituted in \eqref{eq:second_HB_from_HA}, leads directly to the desired functional equation for $B$.
\end{proof}

\subsection{A differential equation for B}

Now, we provide a differential equation for the function $B$, counterpart to the differential equation for $A$ proved by Manin \eqref{eq:Manin diff A}.

\begin{theorem}\label{thm:diff-eq-gf-B}
The exponential generating function $B$ satisfies the differential equation
\begin{equation}
\frac{\partial \HB}{\partial t} = \frac{1}{2} \frac{B-xB+xB^2}{1+x+xt-xA}
\end{equation}
with initial condition $B(0,x)=1$.
\end{theorem}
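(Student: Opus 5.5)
The plan is to differentiate a relation between $A$ and $B$ and then eliminate $\partial A/\partial t$ with Manin's differential equation \eqref{eq:Manin diff A}. The cleanest starting point is the quadratic relation \eqref{eq:quadratic_relation_HB_HA},
\[
(1+x+xt)B^2 - 2xAB + (x-1)A = 0,
\]
because it is polynomial and avoids the fractional powers $A^{\frac{x\pm1}{2}}$.

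First, I differentiate the quadratic relation with respect to $t$. Writing $B' = \partial B/\partial t$ and $A'=\partial A/\partial t$, this gives
\[
xB^2 + 2(1+x+xt)BB' - 2xA'B - 2xAB' + (x-1)A' = 0,
\]
so
\[
B'\cdot 2\bigl((1+x+xt)B - xA\bigr) = A'(2xB + 1 - x) - xB^2 .
\]
Next, I substitute $A' = A/(1+x+xt-xA)$ from \eqref{eq:Manin diff A}. I also use $A(1-x+2xB) = (1+x+xt)B^2$, which follows from Theorem~\ref{thm:functional-B}, to rewrite $A'(2xB+1-x)$ as $(1+x+xt)B^2/(1+x+xt-xA)$. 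The right-hand side then becomes
\[
\frac{B^2\bigl((1+x+xt) - x(1+x+xt-xA)\bigr)}{1+x+xt-xA}.
\]

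The last step is to simplify the factor $(1+x+xt)B - xA$ on the left. I expect this to be the main obstacle: the goal is to show that this factor, together with the numerator above, reduces to $\tfrac12(B - xB + xB^2)$ over $1+x+xt-xA$. What I will try first is to eliminate $A$ entirely using $A = (1+x+xt)B^2/(1-x+2xB)$, with $s \coloneqq 1+x+xt$. Then
\[
sB - xA = \frac{sB(1-x+2xB) - xsB^2}{1-x+2xB} = \frac{sB(1-x+xB)}{1-x+2xB},
\]
and, after clearing the common denominator $1-x+2xB$,
\[
s - x(s - xA) = s(1-x) + x^2A \quad\text{becomes}\quad s\bigl((1-x)(1-x+2xB) + x^2B^2\bigr) = s(1-x+xB)^2 .
\]
So the right-hand side is
\[
\frac{sB^2(1-x+xB)^2}{(1-x+2xB)(s-xA)},
\]
and dividing by $2sB(1-x+xB)/(1-x+2xB)$ gives
\[
B' = \frac{B(1-x+xB)}{2(s-xA)} = \frac12\,\frac{B - xB + xB^2}{1+x+xt-xA},
\]
as claimed.

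Finally, the initial condition $B(0,x)=1$ is read off directly from the definition of $B$. Every division above is by a power series with invertible constant term; for example, at $t=0$ the factor $1-x+xB$ equals $1$, and $s - xA$ equals $1$. Hence all manipulations are legitimate in $\mathbb{Q}(x)[[t]]$.
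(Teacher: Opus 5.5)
Your proof is correct, and it reaches the result by a different route from the paper's.

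\textbf{The paper's route.} It differentiates $(A^{\frac{x-1}{2}}-x)B=1-x$ from \eqref{eq:HB_from_HA} directly and multiplies by $AB$. It then uses that same relation twice, in the forms $A^{\frac{x-1}{2}}B=1-x+xB$ and $(A^{\frac{x-1}{2}}-x)B=1-x$, to clear the fractional powers. This gives
\[
\tfrac{x-1}{2}(1-x+xB)B\,\frac{\partial A}{\partial t}+(1-x)A\,\frac{\partial B}{\partial t}=0,
\]
and a single use of \eqref{eq:Manin diff A} finishes. The factor $B(1-x+xB)$ appears immediately, with no elimination of $A$ and no perfect-square factorization. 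The only inputs are \Cref{thm:HB_from_HA} and Manin's differential equation.

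\textbf{Your route.} You differentiate only the polynomial relation \eqref{eq:quadratic_relation_HB_HA}, so you never differentiate a fractional power of $A$. The cost is extra algebra: you must substitute $A=(1+x+xt)B^2/(1-x+2xB)$ and recognize the identity $(1-x)(1-x+2xB)+x^2B^2=(1-x+xB)^2$. Also, \eqref{eq:quadratic_relation_HB_HA} was itself derived using Manin's functional equation \eqref{eq:Manin A}. So your argument implicitly depends on both of Manin's equations, whereas the paper's needs only the differential one.

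\textbf{Minor points.} The identity $A(1-x+2xB)=(1+x+xt)B^2$ is just a rearrangement of the quadratic relation you started from, so you do not need \Cref{thm:functional-B} for it. Your explicit check that every divisor has invertible constant term in $\mathbb{Q}(x)[[t]]$ is a worthwhile detail that the paper leaves implicit.
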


\begin{proof}
By \eqref{eq:HB_from_HA} we write
\[
(A^{\frac{x-1}{2}}-x)B = 1 - x.
\]
After taking the partial derivative with respect to $t$ and multiplying by $AB$ on both sides we obtain
\begin{align*}
    0 &= AB\frac{\partial}{\partial t}\left((A^{\frac{x-1}{2}}-x)B\right) \\
    &= \frac{x-1}{2}A^{\frac{x-1}{2}}B^2 \frac{\partial}{\partial t}A + A(A^{\frac{x-1}{2}} - x)B\frac{\partial}{\partial t}B \\
    &= \frac{x-1}{2}(1-x+xB)B\frac{\partial}{\partial t}A +A(1-x)\frac{\partial}{\partial t}B,
\end{align*}
where in the last equality we used \eqref{eq:HB_from_HA} twice.
Dividing by $x-1$ and using \eqref{eq:Manin diff A} allows us to conclude.
\end{proof}

\subsection{A quadratic recursion}

One of the most well-known formulas to compute $A_n$ recursively stems from two (essentially equivalent) quadratic recursions which we now recall. 

\begin{theorem}\label{thm:quad-A}
    The following recursive relations hold:
\begin{align*}
    A_n &= A_{n-1} + x\,\sum_{j=2}^{n-1}\binom{n-1}{j} A_j A_{n-j},\\
    &= (1+x)\,A_{n-1} + \frac{x}{2}\,\sum_{j=2}^{n-2}\binom{n}{j} A_j A_{n-j}.
\end{align*}
where $A_0 = A_1 = 1$.
\end{theorem}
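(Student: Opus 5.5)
The plan is to extract both recursions from Manin's differential equation \eqref{eq:Manin diff A}, reading off the coefficient of $t^{n-1}/(n-1)!$ on both sides. Rewrite \eqref{eq:Manin diff A} as
\[
\HA = (1+x+xt)\,\HA' - x\,\HA\,\HA', \qquad \HA' := \tfrac{\partial}{\partial t}\HA .
\]
It is cleanest to work with the shifted series $\widetilde{\HA} := \HA - 1 = t + \sum_{n\ge 2} A_n t^n/n!$. Substituting $\HA = 1+\widetilde{\HA}$, the equation becomes
\[
1+\widetilde{\HA} = (1+xt)\widetilde{\HA}' - x\widetilde{\HA}\,\widetilde{\HA}' + \widetilde{\HA}'.
\]
Since $\widetilde{\HA}' = \sum_{m\ge0} A_{m+1}t^m/m!$, we have $(n-1)!\,[t^{n-1}]\widetilde{\HA}' = A_n$. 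Also $(n-1)!\,[t^{n-1}]\,t\widetilde{\HA}' = (n-1)A_{n-1}$. For the product, the exponential convolution gives
\[
(n-1)!\,[t^{n-1}]\,\widetilde{\HA}\,\widetilde{\HA}' = \sum_{j}\binom{n-1}{j}A_j A_{n-j},
\]
where $\widetilde{\HA}$ has zero constant term, so $j\ge1$.

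Equating coefficients for $n\ge 3$, the constant $1$ on the left does not contribute, and we get
\[
A_{n-1} = A_n + x(n-1)A_{n-1} - x\sum_{j=1}^{n-1}\binom{n-1}{j}A_jA_{n-j}.
\]
The $j=1$ term of the sum is $(n-1)A_1A_{n-1} = (n-1)A_{n-1}$, which cancels exactly against $x(n-1)A_{n-1}$. What remains is the first recursion, with the sum starting at $j=2$. The small cases $n=2,3$ should be checked directly against the listed values $A_2=1$ and $A_3=1+x$. One also has to confirm that the convention $A_0=1$ never enters the sum.

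For the second form, I would symmetrize the first recursion. Using $\binom{n-1}{j} = \binom{n}{j}\frac{n-j}{n}$ and pairing the term $j$ with the term $n-j$ gives
\[
\sum_{j=1}^{n-1}\binom{n-1}{j}A_jA_{n-j} = \frac12\sum_{j=1}^{n-1}\binom{n}{j}A_jA_{n-j},
\]
because $\frac{n-j}{n}+\frac{j}{n}=1$. Restricting to $2\le j\le n-1$, the boundary term $j=n-1$ equals $\binom{n-1}{n-1}A_{n-1}A_1 = A_{n-1}$. The symmetric sum over $2\le j\le n-2$ is then half of the full symmetric sum over $2\le j\le n-2$. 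So I would write
\[
\sum_{j=2}^{n-1}\binom{n-1}{j}A_jA_{n-j} = A_{n-1} + \sum_{j=2}^{n-2}\binom{n-1}{j}A_jA_{n-j},
\]
and then apply the pairing argument to the range $2\le j\le n-2$, which is symmetric under $j\mapsto n-j$. This yields $\frac12\sum_{j=2}^{n-2}\binom{n}{j}A_jA_{n-j}$. Substituting back gives $A_n = (1+x)A_{n-1} + \frac{x}{2}\sum_{j=2}^{n-2}\binom{n}{j}A_jA_{n-j}$.

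The only delicate point is the bookkeeping of boundary terms: the $j=1$ cancellation and the $j=n-1$ extraction, together with the low-$n$ cases. I expect this to be the main source of possible error rather than any conceptual difficulty.
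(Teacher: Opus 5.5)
Your proof is correct, but it is not what the paper does. The paper gives no derivation of this statement: it attributes the first identity to Keel and the second to Manin, and points to \cite[Proposition~4.6]{eur-ferroni-matherne-pagaria-vecchi} for passing from one to the other. You instead read the first recursion off Manin's differential equation \eqref{eq:Manin diff A} by comparing coefficients of $t^{n-1}/(n-1)!$. This is the same technique the paper later uses for the type B recursion in Theorem~\ref{thm:quadratic-recursion}. You then get the second form by symmetrizing with $\binom{n-1}{j}+\binom{n-1}{n-j}=\binom{n}{j}$ over the range $2\le j\le n-2$, which is symmetric under $j\mapsto n-j$. The bookkeeping is right: the $j=1$ term of the convolution cancels $x(n-1)A_{n-1}$, and splitting off $j=n-1$ supplies the extra $xA_{n-1}$. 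Your route is a self-contained formal proof inside the paper's generating-function toolkit. Moreover, \eqref{eq:Manin diff A} follows from \eqref{eq:Manin A}: differentiate once, multiply by $\HA$, and substitute $\HA^x = x^2\HA+1-x^2+(1-x)xt$. So your argument really needs only the functional equation, which is available independently (e.g.\ from Getzler's identity), whereas the paper's attribution rests on Keel's geometric computation.

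Two small corrections. First, your displayed substituted equation has a spurious term. Since $1+x+xt-x\HA = 1+xt-x\widetilde{\HA}$, the equation is $1+\widetilde{\HA} = (1+xt)\widetilde{\HA}' - x\widetilde{\HA}\,\widetilde{\HA}'$, with no extra $+\widetilde{\HA}'$; that term would turn $A_n$ into $2A_n$. Your coefficient comparison actually uses the correct form, so nothing downstream changes. Second, state the ranges explicitly. The coefficient comparison is valid for every $n\ge 2$, so the first recursion holds for all $n\ge 1$ with no separate small-case checks. The second recursion, however, is only valid for $n\ge 3$: at $n=2$ it would give $A_2=1+x$. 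In your derivation this is exactly the step that splits off the term $j=n-1$, which lies in $[2,n-1]$ only when $n\ge 3$.
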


The first of the above two formulas is due to Keel \cite[p.~550]{keel} whereas the second is due to Manin \cite[Corollary~0.3.2]{manin}. As we explained in \cite[Proposition~4.6]{eur-ferroni-matherne-pagaria-vecchi} it is straightforward to deduce each one from the other.

The differential equation proved in Theorem~\ref{thm:diff-eq-gf-B} lets us write the following quadratic recursion for $B_n$ that provides a type B analogue of Theorem~\ref{thm:quad-A}. The only caveat is that we need to include type A terms in the recursion.

\begin{theorem}\label{thm:quadratic-recursion}
    The following recursive relation holds:
    \begin{equation}
    B_{n+1}=B_n + x \sum_{j=1}^{n} \binom{n}{j} B_{j} B_{n-j} + x \sum_{j=2}^{n} \binom{n}{j}2^j A_{j} B_{n+1-j},
    \end{equation}
    with initial condition $B_0=1$.
\end{theorem}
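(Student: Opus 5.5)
The plan is to read off coefficients from the differential equation of Theorem~\ref{thm:diff-eq-gf-B}, after rescaling the variable so that both generating functions become ordinary exponential generating functions in a common variable. First clear denominators in Theorem~\ref{thm:diff-eq-gf-B} to get
\[
2(1+x+xt-xA(t))\,\frac{\partial B}{\partial t} = B - xB + xB^2 .
\]
Then substitute $t=2s$ and set $b(s)\coloneqq B(2s,x)=\sum_{n\ge 0} B_n \frac{s^n}{n!}$ and $a(s)\coloneqq A(2s,x)=\sum_{n\ge0} 2^nA_n\frac{s^n}{n!}$. Since $\frac{d b}{ds}=2\frac{\partial B}{\partial t}(2s)$, the factor $2$ is absorbed and the equation becomes
\[
\bigl(1+x+2xs-x\,a(s)\bigr)\, b'(s) = b(s)-x\,b(s)+x\,b(s)^2 .
\]

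Next, isolate $b'$. Because $A=1+t+\sum_{j\ge2}A_j t^j/j!$, we have $a(s)-1-2s=\sum_{j\ge 2}2^jA_j\frac{s^j}{j!}$. Hence $1+x+2xs-xa = 1 - x\,(a-1-2s)$, and the equation rewrites as
\[
b' = b + x\,(b^2-b) + x\,(a-1-2s)\,b' .
\]

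Finally, compare the coefficients of $s^n/n!$ on both sides, using the exponential convolution rule $[s^n/n!](fg)=\sum_j\binom{n}{j}f_jg_{n-j}$.
\begin{itemize}
\item The term $b'$ contributes $B_{n+1}$, and $b$ contributes $B_n$.
\item The term $b^2-b$ contributes $\sum_{j=0}^n\binom{n}{j}B_jB_{n-j}-B_n$. Since $B_0=1$, subtracting $B_n$ cancels the $j=0$ summand, leaving $\sum_{j=1}^n\binom nj B_jB_{n-j}$.
\item The term $(a-1-2s)b'$ contributes $\sum_{j=2}^{n}\binom nj 2^jA_j B_{n-j+1}$, since the series $a-1-2s$ has no coefficients in degrees $0$ and $1$.
\end{itemize}
Collecting these gives exactly the claimed recursion. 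The initial condition $B_0=1$ is $B(0,x)=1$.

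The only delicate point is bookkeeping. One must track the factor $2$ from the chain rule and the $2^j$ scaling of $A$ under $t=2s$ correctly. One must also see that the linear term $b-xb$ is exactly what cancels the $j=0$ summand of the square, so that the first sum starts at $j=1$ as in the statement.
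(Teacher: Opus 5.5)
Your argument is correct and is essentially the paper's proof: clear denominators in the differential equation of Theorem~\ref{thm:diff-eq-gf-B} and compare coefficients using the exponential convolution rule. The differences are only bookkeeping. You rescale $t=2s$ and absorb the $1+t$ part of $A$ and the $j=0$ term of $B^2$ at the level of series. The paper instead extracts coefficients of $(1+x+xt)\,2\,\partial B/\partial t$ directly, and afterwards cancels the resulting terms $xB_{n+1}$, $2nxB_n$ and $xB_n$ using $A_0=A_1=B_0=1$.
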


\begin{proof}
   First, we can write $2 \frac{\partial}{\partial t} B = \sum_{n \geq 0} B_{n+1}\frac{t^n}{2^n\ n!}$. 
   From Theorem \ref{thm:diff-eq-gf-B} we write
   \[
   (1+x+xt)2\frac{\partial}{\partial t} B = (1-x)B + xB^2 + xA\ 2\frac{\partial}{\partial t} B\]
   and expanding every generating function and every product, we get
   \begin{align*}
   &(1+x+xt)\sum_{n \geq 0} B_{n+1}\frac{t^n}{2^n\ n!} \\
   &= \sum_{n\geq 0}\left[(1-x)B_n + x\sum_{j=0}^n\binom{n}{j}B_jB_{n-j} + x \sum_{j=0}^n\binom{n}{j}2^jA_jB_{n+1-j} \right] \frac{t^n}{2^n\ n!}.
   \end{align*}
   We differentiate by $t$ both sides $n$ times, set $t=0$ and multiply by $2^n$. This results in
   \[
   (1+x)B_{n+1} + 2nx B_n = (1-x)B_n + x\sum_{j=0}^n\binom{n}{j}B_jB_{n-j} + x \sum_{j=0}^n \binom{n}{j}2^jA_jB_{n+1-j},
   \]
   which can be easily simplified to the identity in the statement, after recalling that ${B_0 =1}$ and ${A_0 = A_1 = 1}$.
\end{proof}

The above formula is useful to deduce non-trivial inequalities among the Betti numbers of the cohomology of $\H^{\bullet}(\Mbar_{0,n}^B)$. Notably, Poincar\'e duality ensures that the polynomials $B_n$ are palindromic, whereas the Hard Lefschetz theorem guarantees that they are furthermore unimodal, i.e., their coefficients grow up to the middle term and then decrease.

Let $p(x)$ be a palindromic polynomial with center of symmetry $d/2$, i.e., satisfying $p(x) = x^d\,p(x^{-1})$. Then we can write
\[
p(x) = \sum_{i=0}^{\lfloor d/2 \rfloor} \gamma_i x^i (1+x)^{d-2i}.
\]
We call the polynomial $\gamma(p,x) \coloneq \sum_{i=0}^{\lfloor d/2 \rfloor}\gamma_i x^i$ the \emph{$\gamma$-polynomial of $p$} and we say that $p$ is \emph{$\gamma$-positive} if the coefficients of $\gamma(p,x)$ are non-negative. Being $\gamma$-positive is a desirable property to satisfy, as it sits in the middle of the following chain of implications. For a symmetric polynomial $p$,
\[
\text{real-rootedness} \implies \text{$\gamma$-positivity} \implies \text{unimodality}.
\]

The type A version of the following result is proved by Aluffi, Chen, and Marcolli in \cite[Theorem~1.2]{aluffi-marcolli-chen}.

\begin{corollary}
The polynomials $B_n(x)$ are $\gamma$-positive for every $n$.
\end{corollary}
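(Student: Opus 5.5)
Induction on $n$ via the quadratic recursion of Theorem~\ref{thm:quadratic-recursion}, together with the known $\gamma$-positivity of the type A polynomials $A_j$ (Aluffi, Chen and Marcolli \cite[Theorem~1.2]{aluffi-marcolli-chen}). Two standard facts drive the argument. If $p$ and $q$ are $\gamma$-positive palindromic polynomials with centers $d/2$ and $e/2$, then $pq$ is $\gamma$-positive with center $(d+e)/2$, since $\gamma(pq)=\gamma(p)\gamma(q)$. Sums of $\gamma$-positive polynomials sharing the same center of symmetry are $\gamma$-positive. We also have $(1+x)p$ $\gamma$-positive with center shifted by $1/2$, and $xp$ $\gamma$-positive with center shifted by $1$ (since $x\cdot x^i(1+x)^{d-2i}=x^{i+1}(1+x)^{(d+2)-2(i+1)}$).

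First I record the degrees. $B_n$ has degree $n-1$ for $n\ge 1$, so its center is $(n-1)/2$. The term $B_0=1$ must be treated as palindromic with center $-1/2$, i.e. as $x^{-1}(1+x)$-like bookkeeping; I will handle $j=n$ or $j=0$ terms separately. $A_j$ is palindromic of degree $j-2$ for $j\ge 2$, with center $(j-2)/2$.

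To avoid the awkward $B_0$, I rewrite the recursion as in the proof of Theorem~\ref{thm:quadratic-recursion}, keeping the $j=n$ term $xB_nB_0=xB_n$ and the $A$-term $j=1$, $2^1 nA_1B_n$. Concretely, I would isolate from $x\sum_{j=1}^n\binom{n}{j}B_jB_{n-j}$ the term $j=n$, giving $xB_n$, and combine it with $B_n$ to get $(1+x)B_n$. This has center $(n-1)/2+1/2=n/2$, which is the center of $B_{n+1}$.

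The remaining terms are checked one by one.
\begin{itemize}
\item For $1\le j\le n-1$, $xB_jB_{n-j}$ has center $1+(j-1)/2+(n-j-1)/2=n/2$.
\item For $2\le j\le n$ with $n+1-j\ge 1$, $xA_jB_{n+1-j}$ has center $1+(j-2)/2+(n-j)/2=n/2$.
\end{itemize}
All coefficients $\binom{n}{j}$ and $\binom{n}{j}2^j$ are positive. Each summand is therefore $\gamma$-positive with center $n/2$ by the induction hypothesis, the type A result, and the product and shift facts above. Hence $B_{n+1}$ is $\gamma$-positive, with base case $B_1=1$ (and $B_0=1$ trivially).

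The main obstacle is the bookkeeping of centers. I must verify that after absorbing the $j=n$ boundary term into $(1+x)B_n$, no summand has a mismatched center; the $j=n$ term of the $A$-sum, $xA_nB_1$, has center $1+(n-2)/2=n/2$, which also matches. The other point to get right is importing type A $\gamma$-positivity correctly for $A_j$ with $j\ge 2$, including $A_2=1$ of center $0$.
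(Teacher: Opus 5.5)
Your proof is correct and follows the paper's argument: absorb the $j=n$ term $xB_nB_0=xB_n$ into $(1+x)B_n$, check that every remaining summand is palindromic with center $n/2$, and conclude by multiplicativity of $\gamma$-polynomials, strong induction, and the type A $\gamma$-positivity of Aluffi--Chen--Marcolli. One slip: your aside about ``keeping the $A$-term $j=1$'' should be dropped, since that term does not occur in the quadratic recursion (it cancels when the recursion is derived, and it would not have center $n/2$); your final bookkeeping rightly omits it.
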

\begin{proof}
We prove this by strong induction on $n$, with the base case being trivial. By Theorem \ref{thm:quadratic-recursion}, we can write
\[
B_{n+1} = (1+x)B_{n} + x\sum_{j=1}^{n-1}\binom{n}{j}B_jB_{n-j} + x\sum_{j=2}^{n}\binom{n}{j}2^jA_jB_{n+1-j}.
\]
Since $\deg(B_j) = j-1$ for $j\geq 1$ and $\deg(A_j) = j-2$ for $j\geq 2$, all summands on the right-hand side are palindromic with center of symmetry $n/2$. Therefore, we can write the following identity for the $\gamma$-polynomials.
\[
\gamma_{n+1}^B = \gamma_{n}^B + x\sum_{j=1}^{n-1}\binom{n}{j}\gamma_j^B\gamma_{n-j}^B + x\sum_{j=2}^{n}\binom{n}{j}2^j\gamma_j^A\gamma_{n+1-j}^B.
\]
In the above equation we use the shorthand $\gamma^A_n(x) = \gamma(A_n,x)$ and $\gamma_n^B(x):= \gamma(B_n,x)$ for each $n\geq 1$.
By strong induction and by \cite[Theorem~1.2]{aluffi-marcolli-chen}, every polynomial on the right-hand side has positive coefficients, therefore $\gamma_{n+1}^B$ has positive coefficients as well. 
\end{proof}

The following are the first few values of $\gamma_n^B(x)$.

\[ \gamma_n^B(x) = \begin{cases}
1 & n= 1 ,\\
1 & n= 2 ,\\
6 \, x + 1 & n= 3 ,\\
32 \, x + 1 & n= 4 ,\\
240 \, x^{2} + 118 \, x + 1 & n= 5 ,\\
3096 \, x^{2} + 380 \, x + 1 & n= 6 ,\\
25200 \, x^{3} + 25000 \, x^{2} + 1158 \, x + 1 & n= 7 ,\\
599808 \, x^{3} + 164320 \, x^{2} + 3456 \, x + 1 & n= 8 ,\\
5080320 \, x^{4} + 8491968 \, x^{3} + 968224 \, x^{2} + 10254 \, x + 1 & n= 9.\\
\end{cases}\]

A very intriguing conjecture by Aluffi, Chen, and Marcolli appearing in \cite[Conjecture~1.1]{aluffi-marcolli-chen} postulates that the polynomials $A_n$ are real-rooted for every $n\geq 1$. For a palindromic polynomial with nonnegative coefficients, it is known that real-rootedness implies both $\gamma$-positivity and (ultra) log-concavity. The last corollary makes natural to propose the following type B analogue of the conjecture in \cite{aluffi-marcolli-chen}.

\begin{conjecture}
    For all $n\geq 1$ the polynomials $B_n$ are real-rooted.
\end{conjecture}

With the help of a computer we have verified that $B_n$ is real-rooted for every $n\leq 300$.

\subsection{Compositional formulas for B}

Yet another classical formula for the generating function $A$ was found by Getzler in \cite[p.~228]{Getzler94}. We recapitulate Getzler's formula now.

\begin{theorem}
    The generating function $A(t,x)$ satisfies the compositional identity:
    \[A\left(t-\frac{(1+t)^x - 1-xt}{x(x-1)},x\right) = t.\]
\end{theorem}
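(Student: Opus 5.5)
The plan is to reduce Getzler's identity to Manin's functional equation \eqref{eq:Manin A} together with the uniqueness clause in Manin's theorem. First, a normalization point. With the convention $A = 1+t+\dots$ used in this paper, the identity should read $A(s(t),x) = 1+t$, or equivalently $(A-1)(s(t),x)=t$, where
\[ s(t) \coloneq t-\frac{(1+t)^x-1-xt}{x(x-1)}. \]
This is exactly the remark above that $A-1$ is the compositional inverse of $-\overline{\chi}^A(t,x)$, since $s(t) = -\overline{\chi}^A(t,x)$. I will prove it in this form.

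\textbf{Step 1: $s$ is invertible.} Expanding $(1+t)^x$ gives
\[ s(t) = t-\frac{t^2}{2}-\frac{(x-2)t^3}{6}-\cdots, \]
and every coefficient lies in $\mathbb{Q}[x]$, because $\binom{x}{k}/(x(x-1))$ is a polynomial for $k\ge 2$. Since $s(0)=0$ and $s'(0)=1$, $s$ has a compositional inverse $T(s)=s+\frac{s^2}{2}+\cdots$ in $\mathbb{Q}[x][[s]]$. Set $\Phi(s)\coloneq 1+T(s)$. Then $\Phi\in 1+s+s^2\mathbb{Q}[x][[s]]$.

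\textbf{Step 2: $\Phi$ satisfies Manin's equation.} This is a direct computation in the variable $t$, where $\Phi(s(t))=1+t$. The left side of \eqref{eq:Manin A} becomes $(1+t)^x-x^2(1+t)$. On the right side,
\[ (1-x)x\,s(t) = (x-x^2)t+(1+t)^x-1-xt, \]
so
\[ 1-x^2+(1-x)x\,s(t) = (1+t)^x-x^2-x^2t = (1+t)^x-x^2(1+t). \]
The two sides agree. Since $t\mapsto s(t)$ is an invertible substitution, this gives $\Phi(s)^x-x^2\Phi(s)=1-x^2+(1-x)xs$ as an identity of power series in $s$. Here $\Phi^x$ means $\exp(x\log\Phi)$, which is well defined because $\Phi$ has constant term $1$.

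\textbf{Step 3: uniqueness.} By Manin's theorem, $A$ is the unique solution of \eqref{eq:Manin A} in $1+s+s^2\mathbb{Q}[x][[s]]$, so $\Phi=A$. Hence $A(s(t),x)=1+t$, which is Getzler's identity.

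The main obstacle I expect is making sure the uniqueness statement really applies, that is, confirming that $\Phi$ lies in the stated class. Step 1 handles this. If one prefers not to rely on Manin's uniqueness as a black box, there is a direct check. Write $\Phi=1+s+\sum_{n\ge2}c_n s^n$ and compare coefficients of $s^n$ in \eqref{eq:Manin A}. The term $c_n$ enters linearly with coefficient $x-x^2$, and all other contributions involve only $c_2,\dots,c_{n-1}$. Since $x-x^2\neq0$ in $\mathbb{Q}(x)$, the coefficients are determined recursively, which gives uniqueness in $\mathbb{Q}(x)[[s]]$, and a fortiori in the stated class.
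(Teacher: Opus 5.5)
Your proof is correct, and so is your normalization fix. With the paper's convention $A=1+t+\cdots$, the displayed ``$=t$'' should read $=1+t$. That is exactly the form $A(Y,x)=1+t$ the paper itself uses later when it invokes this identity.

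The paper does not prove the statement. It imports it from Getzler \cite{Getzler94}, and later also points to \cite[Proposition~4.5]{eur-ferroni-matherne-pagaria-vecchi}. So your argument is a genuinely different route. It shows that once Manin's functional equation \eqref{eq:Manin A} is granted (the paper also takes it as a black box), Getzler's identity needs no further input.

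You can streamline it and drop the uniqueness step. Substituting $u=A-1$ into $s(u)$ and multiplying by $x(x-1)$ gives
\[
x(x-1)\,s(A-1) \;=\; 1-x^2+x^2A-A^x ,
\]
so $s(A-1)=t$ is literally a rearrangement of \eqref{eq:Manin A}. Hence $A-1$ is a right compositional inverse of $s$. Since both series are $t+O(t^2)$, it is also a left inverse, which gives $A(s(t),x)=1+t$. The paper uses the same fact at the end of the proof of the type B compositional identities.

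This shortcut needs only the existence half of Manin's theorem, and it shows the two identities are equivalent rather than one merely implying the other. Your Step~2 is the same computation run in the opposite direction, which is why you then need uniqueness to identify $\Phi$ with $A$.
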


By employing our functional equation, we may derive type B versions of Getzler's result, each of the two depending on which part of the formula we want to preserve.

\begin{theorem}
The generating function $B(t,x)$ satisfies the compositional identity:
\[ \HB\left(t-\frac{(1+t)^x-1-xt}{x(x-1)},x\right) =  \frac{x-1}{x-(1+t)^{\frac{x-1}{2}}}.\]
Furthermore, it also satisfies the following compositional identity:
    \[B \left( \frac{1}{x}\left( \frac{1-x+2xt}{t^2} \left( \frac{1-x}{t}+x \right) ^{\frac{2}{x-1}}-1-x\right),x \right)=t.\]
\end{theorem}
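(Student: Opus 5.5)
Both identities follow by substituting suitable series into relations already established: the first uses Getzler's identity together with Theorem~\ref{thm:HB_from_HA}, the second uses the rational expression for $A$ in terms of $B$ from the proof of Theorem~\ref{thm:functional-B}.

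\textbf{First identity.} Write $g(t)=t-\frac{(1+t)^x-1-xt}{x(x-1)}$, so that $g(0)=0$ and $g'(0)=1$. Getzler's theorem, as quoted, says $A(g(t),x)=t$. In the normalisation used here, with $A=1+t+\cdots$, the precise statement should be $A(g(t),x)-1=t$, i.e.\ $A(g(t),x)=1+t$. I will check this on low-order terms: $g(t)=t-\tfrac{t^2}{2}+O(t^3)$, so $A(g)=1+t-\tfrac{t^2}{2}+\tfrac{t^2}{2}+\cdots=1+t+O(t^3)$. This matches the Remark after Lemma~\ref{lemma:gen_fun_char_poly_B}, since $g=-\overline{\chi}^A$ is the compositional inverse of $A-1$.

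Now substitute $t\mapsto g(t)$ into \eqref{eq:HB_from_HA}, $B=\frac{1-x}{A^{(x-1)/2}-x}$. Substitution into formal power series is legitimate because $g(0)=0$, and taking the power $A^{(x-1)/2}$ commutes with substitution since $A$ has constant term $1$. This gives $B(g(t),x)=\frac{1-x}{(1+t)^{(x-1)/2}-x}=\frac{x-1}{x-(1+t)^{(x-1)/2}}$, which is the first claim.

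\textbf{Second identity.} Set $h(t)=\frac1x\bigl(\frac{1-x+2xt}{t^2}\bigl(\frac{1-x}{t}+x\bigr)^{2/(x-1)}-1-x\bigr)$. I want $B(h(t))=t$, that is, $h$ is the inverse of $B$ as a map $t\mapsto B$. The plan is to solve the functional equation of Theorem~\ref{thm:functional-B} for the independent variable. Write $T$ for the old variable and $b=B(T)$. From Theorem~\ref{thm:HB_from_HA} we have $A^{(x-1)/2}=\frac{1-x}{b}+x$, so $A=\bigl(\frac{1-x}{b}+x\bigr)^{2/(x-1)}$. On the other hand, the proof of Theorem~\ref{thm:functional-B} gives $A=\frac{(1+x+xT)b^2}{1-x+2xb}$. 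Equating the two expressions yields
\[
1+x+xT=\frac{1-x+2xb}{b^2}\Bigl(\frac{1-x}{b}+x\Bigr)^{2/(x-1)},
\]
so $T=h(b)$, i.e.\ $h(B(T))=T$.

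\textbf{Main obstacle.} The difficulty is formal: $B$ has constant term $1$, not $0$, so $h$ should be read as a series in $t-1$. I will check that $h(1)=\frac1x(1-x+2x-1-x)=0$ and that $h'(1)\neq0$; the latter follows from $B'(0)=\tfrac12$ and the chain rule, giving $h'(1)=2$. With these, $h$ is an invertible series centred at $1$. A left inverse of such a series is also a right inverse, so $B(h(t))=t$ as series in $t-1$, which is the second claim.
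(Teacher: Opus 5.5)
Your proposal is correct and follows the paper's proof. The first identity comes from substituting Getzler's inverse into \eqref{eq:HB_from_HA} via $A(g(t),x)=1+t$; the paper writes this as $B=-1/\overline{\chi}^B(A-1,x)$, which is the same formula, and it also uses the normalisation $1+t$ rather than the $t$ in the quoted statement. The second identity comes from solving the functional equation of Theorem~\ref{thm:functional-B} for $t$ and using that left and right compositional inverses agree, and your check that $h$ is a series in $t-1$ with $h(1)=0$ and $h'(1)=2$ just makes explicit the formal point the paper leaves implicit.
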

\begin{proof}
    To prove the first one, let us write 
    \[ Y(t,x) = t-\frac{(1+t)^x-1-xt}{x(x-1)}. \] 
    By Lemma~\ref{lemma:gen_fun_char_poly_B} the statement is then equivalent to $\HB(Y,x)=-\frac{1}{\overline{\chi}^B(t,x)}$. This can be proven by rewriting~\eqref{eq:HB_from_HA} as ${\HB(t,x)=-\frac{1}{\overline{\chi}^B(\HA-1,x)}}$ and by observing that $A(Y,x)=1+t$, see \cite[p.~228]{Getzler94} or \cite[Proposition~4.5]{eur-ferroni-matherne-pagaria-vecchi}, so that
    \[
    B(Y,x) = -\frac{1}{\overline{\chi}^B(A(Y,x)-1,x)} = -\frac{1}{\overline{\chi}^B(t,x)},
    \]
    as required.
    Let us prove the second identity now. From \eqref{eq:compositional_for_B} we express $t$ in term of $B$, obtaining
    \[ t= \frac{1}{x}\left( \frac{1-x+2xB}{B^2} \left( \frac{1-x}{B}+x \right) ^{\frac{2}{x-1}}-1-x\right).\]
    Since left and right compositional inverses of a formal power series must coincide, we obtain the second compositional formula.
\end{proof}

\section{Iterative formulas for type B}

We close this paper by discussing an iterative (i.e., non recursive) formula for computing the polynomials $B_n(x)$. In \cite{eur-ferroni-matherne-pagaria-vecchi} we provide several iterative formulas for the Chow polynomials of matroids with arbitrary building sets. In the special case of the partition lattice of type B and the minimal building set, the result can actually be expressed in a very satisfying way by means of the Bell polynomials.

\begin{theorem}\label{thm:iterative-B}
For every $n$ the following formula holds:
    \[
B_n(x) = \sum_{k=1}^n 2^{n-k}\left( \sum_{\ell=0}^k \ell! \Bell_{k,\ell} (\overline{\chi}_{1}^B, \overline{\chi}_{2}^B, \dots ) \right)  \Bell_{n,k} (A_1 ,A_2, \dots)
    \]
\end{theorem}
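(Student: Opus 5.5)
The plan is to start from Theorem~\ref{thm:HB_from_HA} in the form
\[
B(t,x) = \frac{-1}{\overline{\chi}^B(A(t,x)-1,x)},
\]
and to expand the right-hand side in two compositional steps. First, let $R(t) \coloneq -1/\overline{\chi}^B(t,x)$, regarded as a type B exponential generating function $R(t)=\sum_{k\ge 0} r_k \frac{t^k}{2^k k!}$. Then $B = R(A-1)$, where $A-1 = \sum_{n\geq 1} A_n \frac{t^n}{n!}$ is an ordinary (type A) exponential generating function with zero constant term. Corollary~\ref{coro:compositional_formula_type_B} applied to $G=R$ and $F=A-1$ immediately gives
\[
B_n = \sum_{k=1}^n 2^{n-k} r_k \Bell_{n,k}(A_1,A_2,\dots)
\]
for $n\ge 1$. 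The $k=0$ term vanishes because $\Bell_{n,0}=0$ for $n\ge1$. It therefore remains only to identify
\[
r_k=\sum_{\ell=0}^k \ell!\,\Bell_{k,\ell}(\overline{\chi}^B_1,\overline{\chi}^B_2,\dots).
\]

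For the second step, I would compute the coefficients $r_k$ of the reciprocal $-1/\overline{\chi}^B$. Since $\overline{\chi}^B_0=-1$, we can write $-\overline{\chi}^B(t,x) = 1 - S(t)$ with
\[
S(t)=\sum_{k\ge1}\overline{\chi}^B_k \frac{t^k}{2^k k!}.
\]
Then $R = 1/(1-S) = \sum_{\ell\ge0} S^\ell$. To handle the type B normalization, I would pass to the type A series via the substitution $t\mapsto 2t$, that is, $R^B(2t)=R^A(t)$. The type A series $S(2t)=\sum_{k\ge 1}\overline{\chi}^B_k \frac{t^k}{k!}$ has coefficients exactly $\overline{\chi}^B_k$. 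The compositional formula (Theorem~\ref{thm:compositional_formula}) with $G(u)=1/(1-u)=\sum_\ell \ell!\,\frac{u^\ell}{\ell!}$ then yields
\[
r_k=\sum_{\ell=0}^k \ell!\,\Bell_{k,\ell}(\overline{\chi}^B_1,\overline{\chi}^B_2,\dots).
\]
Alternatively, one could invoke Theorem~\ref{thm:reciprocal_series} with $f_0=-1$ and track the signs: the factor $(-1)^\ell$ there combines with $f_0^{-(k+1)}$ and the divisions $f_i/f_0$ to give exactly the plus signs above, once the extra overall minus from $R=-1/\overline{\chi}^B$ is included. That sign bookkeeping is the only delicate point, so I would prefer the geometric-series route.

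The main obstacle I expect is consistency of normalizations. The coefficients $r_k$ are defined with respect to $\frac{t^k}{2^k k!}$ (type B), which is exactly what Corollary~\ref{coro:compositional_formula_type_B} requires of $G$. In the computation of $r_k$, however, the type B series must be converted to a type A series before applying the classical Bell-polynomial formula. One has to check that this rescaling introduces no stray powers of $2$: since $r_k\frac{(2t)^k}{2^k k!} = r_k \frac{t^k}{k!}$ and $S(2t)$ has type A coefficients $\overline{\chi}^B_k$, no powers of $2$ appear. Finally, I would sanity-check the formula at $n=1,2$. For $n=1$ we get $r_1=\overline{\chi}^B_1=1$, so $B_1=1$. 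For $n=2$ we get $r_2=\overline{\chi}^B_2+2(\overline{\chi}^B_1)^2=(x-3)+2=x-1$, and hence $B_2=2r_1A_2+r_2A_1^2=2+x-1=x+1$, matching the table.
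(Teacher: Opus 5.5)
Your proposal is correct and is essentially the paper's proof. Like the paper, you write $B=-1/\overline{\chi}^B(A-1,x)$ via Theorem~\ref{thm:HB_from_HA}, expand the reciprocal, and conclude with Corollary~\ref{coro:compositional_formula_type_B}. The only difference is that you get the coefficients of $-1/\overline{\chi}^B$ as a geometric series after the rescaling $t\mapsto 2t$, instead of quoting Theorem~\ref{thm:reciprocal_series}. That preference is well founded: Theorem~\ref{thm:reciprocal_series} as printed has a normalization slip (its prefactor should be $1/f_0$, not $1/f_0^{n+1}$), so tracking signs literally through it with $f_0=-1$, as in your aside, gives a spurious factor $(-1)^k$ rather than the plus signs you claim there.
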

\begin{proof}
    By~\eqref{eq:HB_from_HA} we write again $B(t,x) = -\frac{1}{\overline{\chi}^B(A-1,x)}$. The theorem follows by using first Theorem \ref{thm:reciprocal_series} to obtain a formula for $\frac{1}{\overline{\chi}^B}$ and then Corollary~\ref{coro:compositional_formula_type_B}.
\end{proof}

In type $A$, since $A-1$ is the compositional inverse of $-\overline{\chi}^A(t,x)$, we can apply Theorem~\ref{thm:compositional_formula} to obtain
\[ A_n = \sum_{k=0}^{n-1} \Bell_{n+k-1,k}(0,\overline{\chi}^A_2,\ldots, \overline{\chi}^A_{n-k+1}).\]
This formula, combined with Poincar\'e duality (i.e., the fact that $A_n$ is a palindromic polynomial) and an elementary identity involving Stirling numbers of the first and the second kind \cite[Equation~(8)]{eur-ferroni-matherne-pagaria-vecchi} allows one to deduce the following closed formula for $A_n$.

\begin{theorem}\label{thm:amn}
    For every $n$ the following formula holds:
    \[ A_n(x) = (1-x)^n \sum_{k\geq 0}\sum_{j\geq 0} s(k+n,k+n-j)S(k+n-j,k+1) x^{k+j},\]
    where $s(n,k)$ and $S(n,k)$ denote, respectively, the signed Stirling numbers of the first kind, and the Stirling numbers of the second kind.
\end{theorem}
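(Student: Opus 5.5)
The plan is to start from the Lagrange-inversion formula displayed just above the statement,
\[ A_n = \sum_{k=0}^{n-1} \Bell_{n+k-1,k}(0,\overline{\chi}^A_2,\ldots, \overline{\chi}^A_{n-k+1}), \]
and make every ingredient explicit in terms of Stirling numbers. Since $\chi^A_m(x)=\prod_{i=1}^{m}(x-i)\cdot$(suitable shift), the series $-\overline{\chi}^A(t,x) = t - \frac{(1+t)^x-1-xt}{x(x-1)}$ has its higher-order part expanded through $(1+t)^x=\sum_m \binom{x}{m}t^m$. I would not expand the Bell polynomials term by term. Instead I would use the generating-function identity
\[ \sum_{n}\Bell_{n,k}(0,f_2,f_3,\dots)\frac{t^n}{n!}=\frac{1}{k!}\Bigl(\frac{(1+t)^x-1-xt}{x(x-1)}\Bigr)^k, \]
valid for $f_m=\overline{\chi}^A_m$ with $m\ge 2$ up to sign. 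I would then write $(1+t)^x-1-xt$ in terms of $e^{x\log(1+t)}$, so that powers of it produce Stirling numbers of the first kind, via $\log(1+t)^j/j! = \sum s(n,j)t^n/n!$. The powers $(e^u-1-\dots)^k$ in turn produce Stirling numbers of the second kind, via $(e^u-1)^k/k!=\sum S(m,k)u^m/m!$.

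An alternative, cleaner route, and the one I would commit to, is to apply Lagrange inversion in the power form rather than the Bell form. Write $A-1=u$ with $u = \phi^{-1}$, where $\phi(u)=u-\frac{(1+u)^x-1-xu}{x(x-1)}$. Then
\[ [t^n]\,u = \tfrac1n[u^{n-1}]\bigl(u/\phi(u)\bigr)^n. \]
Set $u/\phi(u) = \bigl(1-g(u)\bigr)^{-1}$ with $g(u)=\frac{(1+u)^x-1-xu}{x(x-1)u}$, expand geometrically as $\sum_k \binom{n+k-1}{k} g^k$, and then expand $g^k$. Here the factor $(1-x)^n$ in the target should appear after recombining signs and the factors $(x(x-1))^{-k}$.

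The step I expect to be the main obstacle is the one where, as the text indicates, the expansion coming directly from Lagrange inversion does not obviously have the stated shape. The prequel used palindromicity $A_n(x)=x^{n-2}A_n(1/x)$ together with \cite[Equation~(8)]{eur-ferroni-matherne-pagaria-vecchi}, an identity converting a sum over $s(\cdot,\cdot)$ and $S(\cdot,\cdot)$ with binomial weights into the double sum $\sum_{k,j}s(k+n,k+n-j)S(k+n-j,k+1)x^{k+j}$. Concretely, I would first extract from Lagrange inversion a formula for $A_n(x)$ as a finite sum of terms $x^{a}(x-1)^{-b}$ times Stirling-type coefficients. Next I would substitute $x\mapsto 1/x$ and multiply by $x^{n-2}$, so that the negative powers of $(x-1)$ turn into a global $(1-x)^n$ times a power series in $x$. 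Finally I would invoke Equation~(8) of the prequel to identify the coefficients with $s(k+n,k+n-j)S(k+n-j,k+1)$. Checking that the summation ranges match, and that the formally infinite double sum collapses to a polynomial after multiplying by $(1-x)^n$, is the delicate bookkeeping. I would verify it against the values $A_3=x+1$ and $A_4=x^2+5x+1$ before trusting the index shifts.
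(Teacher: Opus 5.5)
Your outline uses the same ingredients as the paper's sketch: Getzler's inversion, palindromicity, and a Stirling identity from the prequel. The paper itself only cites that last step. As a proof, however, the decisive step is missing.

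\textbf{Your ``power form'' is not a second route.} Since $[u^{n-1}]g^k=\frac{k!}{(n+k-1)!}\Bell_{n+k-1,k}(0,\overline{\chi}^A_2,\dots)$, the expansion $\frac1n[u^{n-1}]\sum_k\binom{n+k-1}{k}g^k$ is literally the Bell formula displayed before the theorem. Because $g(0)=0$, it is a finite sum ($k\le n-1$) with denominators $(x(x-1))^k$. The term $-xu$ in $(1+u)^x-1-xu$ also spoils the clean Stirling factorization and produces extra alternating binomial sums. Your first route has the same problem, since $(1+t)^x-1-xt=e^{xv}-1-x(e^v-1)$ with $v=\log(1+t)$ is not a truncated exponential.

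\textbf{The claimed mechanism fails.} After $x\mapsto 1/x$ the factors $(x(x-1))^{-k}$ become $x^{2k}(1-x)^{-k}$ with $1\le k\le n-1$, so no global $(1-x)^n$ appears. The target double sum is a genuinely infinite series; for $n=2$ it equals $(1-x)^{-2}$. Matching it therefore requires a resummation identity that you never state. Invoking ``Equation~(8)'' with guessed content leaves the heart of the argument as a black box.

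\textbf{The missing idea: factor out the constant $(x-1)/x$ and expand in powers of $x^{-2}$.} By Getzler's identity, $A-1$ is the compositional inverse of $\phi(u)=\frac{x^2u-h(u)}{x(x-1)}$ with $h(u)=(1+u)^x-1$. Hence
\[ \Bigl(\frac{u}{\phi(u)}\Bigr)^n=\Bigl(\frac{x-1}{x}\Bigr)^n\sum_{k\ge 0}\binom{n+k-1}{k}\frac{h(u)^k}{x^{2k}u^k}. \]
This series converges in $\mathbb{Q}((x^{-1}))[[u]]$, because the coefficient of $u^N$ in the $k$-th summand has $x$-degree at most $N-k$. Lagrange inversion over the field $\mathbb{Q}((x^{-1}))$ is legitimate.

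Your own two generating functions, applied to $h$ (which has no $-xu$ term), give $[u^N]h(u)^k=\frac{k!}{N!}\sum_m s(N,m)S(m,k)x^m$. The binomial coefficient then cancels all factorials. Writing $R_n(x)$ for the right-hand side of the theorem and putting $m=n+k-j$, one gets for $n\ge 2$
\[ A_n(x)=\Bigl(\frac{x-1}{x}\Bigr)^n\sum_{k\ge 0}x^{-2k-2}\sum_m s(n+k,m)S(m,k+1)x^m=x^{n-2}R_n(1/x). \]
Palindromicity $A_n(x)=x^{n-2}A_n(1/x)$ now gives $A_n=R_n$; the case $n=1$ is checked directly. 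The factor $(1-x)^n$ thus comes from $\bigl((x-1)/x\bigr)^n=(1-x^{-1})^n$, not from recombining the $(x(x-1))^{-k}$.
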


The above result was established first in work of Aluffi, Marcolli, and Nascimento \cite{aluffi-marcolli-nascimento}, and later reproved in \cite[Theorem~1.5]{eur-ferroni-matherne-pagaria-vecchi}. In particular, the formula appearing in Theorem~\ref{thm:iterative-B} might be regarded as a type B analogue of the formula by Aluffi, Marcolli, and Nascimento. 

Nonetheless, it would be desirable to express $B_n$ in terms of a reasonably simple combination of the Whitney numbers of the first and second kind of the partition lattice $\Pi_n^B$ (note that in the type A case the Whitney numbers of $\Pi_n$ of both kinds are exactly the classical Stirling numbers). Despite several attempts, we have not been able to establish such a simple formula, which we propose as a challenge to the interested readers.

\begin{problem}
    Find a closed expression for $B_n$, akin to Theorem~\ref{thm:amn}, in terms of the Whitney numbers of both kinds of the type B partition lattices.
\end{problem}

\subsection*{Acknowledgments}

We thank our co-authors, Chris Eur and Jacob P. Matherne, who collaborated with us in the prequel paper \cite{eur-ferroni-matherne-pagaria-vecchi}. 
Luis Ferroni and Roberto Pagaria are members of the INDAM research group GNSAGA -- Gruppo Nazionale per le Strutture Algebriche, Geometriche e le loro Applicazioni.

\bibliographystyle{amsalpha}
\bibliography{bibliography}

\end{document}